\newtheorem{theorem}{Theorem}
\newtheorem{axiom}[theorem]{Axiom}
\newtheorem{conjecture}[theorem]{Conjecture}
\newtheorem{corollary}[theorem]{Corollary}
\newtheorem{definition}[theorem]{Definition}
\newtheorem{example}[theorem]{Example}
\newtheorem{exercise}[theorem]{Exercise}
\newtheorem{lemma}[theorem]{Lemma}
\newtheorem{proposition}[theorem]{Proposition}
\newtheorem{remark}[theorem]{Remark}
\newenvironment{proof}[1][Proof]{\textbf{#1.} }{\ \rule{0.5em}{0.5em}}
\let\pdfoutput=\undefined\fi
\chardef\@x10\chardef\@xv60
\def\tcitime{
\def\@time{%
  \@minute\time\@hour\@minute\divide\@hour\@xv
  \ifnum\@hour<\@x 0\fi\the\@hour:%
  \multiply\@hour\@xv\advance\@minute-\@hour
  \ifnum\@minute<\@x 0\fi\the\@minute
  }}%
\def\x@hyperref#1#2#3{%
   \catcode`\~ = 12
   \catcode`\$ = 12
   \catcode`\_ = 12
   \catcode`\# = 12
   \catcode`\& = 12
   \y@hyperref{#1}{#2}{#3}%
}
\def\y@hyperref#1#2#3#4{%
   #2\ref{#4}#3
   \catcode`\~ = 13
   \catcode`\$ = 3
   \catcode`\_ = 8
   \catcode`\# = 6
   \catcode`\& = 4
}
\def\QCTOpt[#1]#2{%
  \def\QCTOptB{#1}
  \def\QCTOptA{#2}
}
\def\QCTNOpt#1{%
  \def\QCTOptA{#1}
  \let\QCTOptB\empty
}
\def\Qct{%
  \@ifnextchar[{%
    \QCTOpt}{\QCTNOpt}
}
\def\QCBOpt[#1]#2{%
  \def\QCBOptB{#1}%
  \def\QCBOptA{#2}%
}
\def\QCBNOpt#1{%
  \def\QCBOptA{#1}%
  \let\QCBOptB\empty
}
\def\Qcb{%
  \@ifnextchar[{%
    \QCBOpt}{\QCBNOpt}%
}
\def\PrepCapArgs{%
  \ifx\QCBOptA\empty
    \ifx\QCTOptA\empty
      {}%
    \else
      \ifx\QCTOptB\empty
        {\QCTOptA}%
      \else
        [\QCTOptB]{\QCTOptA}%
      \fi
    \fi
  \else
    \ifx\QCBOptA\empty
      {}%
    \else
      \ifx\QCBOptB\empty
        {\QCBOptA}%
      \else
        [\QCBOptB]{\QCBOptA}%
      \fi
    \fi
  \fi
}
\def\GRAPHICSPS#1{%
 \ifcase\GRAPHICSTYPE
   \special{ps: #1}%
 \or
   \special{language "PS", include "#1"}%
 \fi
}%
\def\graffile#1#2#3#4{%
    \bgroup
	   \@inlabelfalse
       \leavevmode
       \@ifundefined{bbl@deactivate}{\def~{\string~}}{\activesoff}%
        \raise -#4 \BOXTHEFRAME{%
           \hbox to #2{\raise #3\hbox to #2{\null #1\hfil}}}%
    \egroup
}%
\def\draftbox#1#2#3#4{%
 \leavevmode\raise -#4 \hbox{%
  \frame{\rlap{\protect\tiny #1}\hbox to #2%
   {\vrule height#3 width\z@ depth\z@\hfil}%
  }%
 }%
}%
\let\nographics=\@msidraft
\newif\ifwasdraft
\def\GRAPHIC#1#2#3#4#5{%
   \ifnum\@msidraft=\@ne\draftbox{#2}{#3}{#4}{#5}%
   \else\graffile{#1}{#3}{#4}{#5}%
   \fi
}
\def\addtoLaTeXparams#1{%
    \edef\LaTeXparams{\LaTeXparams #1}}%
\newif\ifBoxFrame \BoxFramefalse
\newif\ifOverFrame \OverFramefalse
\newif\ifUnderFrame \UnderFramefalse
\def\BOXTHEFRAME#1{%
   \hbox{%
      \ifBoxFrame
         \frame{#1}%
      \else
         {#1}%
      \fi
   }%
}
\def\doFRAMEparams#1{\BoxFramefalse\OverFramefalse\UnderFramefalse\readFRAMEparams#1\end}%
\def\readFRAMEparams#1{%
 \ifx#1\end%
  \let\next=\relax
  \else
  \ifx#1i\dispkind=\z@\fi
  \ifx#1d\dispkind=\@ne\fi
  \ifx#1f\dispkind=\tw@\fi
  \ifx#1t\addtoLaTeXparams{t}\fi
  \ifx#1b\addtoLaTeXparams{b}\fi
  \ifx#1p\addtoLaTeXparams{p}\fi
  \ifx#1h\addtoLaTeXparams{h}\fi
  \ifx#1X\BoxFrametrue\fi
  \ifx#1O\OverFrametrue\fi
  \ifx#1U\UnderFrametrue\fi
  \ifx#1w
    \ifnum\@msidraft=1\wasdrafttrue\else\wasdraftfalse\fi
    \@msidraft=\@ne
  \fi
  \let\next=\readFRAMEparams
  \fi
 \next
 }%
\def\IFRAME#1#2#3#4#5#6{%
      \bgroup
      \let\QCTOptA\empty
      \let\QCTOptB\empty
      \let\QCBOptA\empty
      \let\QCBOptB\empty
      #6%
      \parindent=0pt
      \leftskip=0pt
      \rightskip=0pt
      \setbox0=\hbox{\QCBOptA}%
      \@tempdima=#1\relax
      \ifOverFrame
          \typeout{This is not implemented yet}%
          \show\HELP
      \else
         \ifdim\wd0>\@tempdima
            \advance\@tempdima by \@tempdima
            \ifdim\wd0 >\@tempdima
               \setbox1 =\vbox{%
                  \unskip\hbox to \@tempdima{\hfill\GRAPHIC{#5}{#4}{#1}{#2}{#3}\hfill}%
                  \unskip\hbox to \@tempdima{\parbox[b]{\@tempdima}{\QCBOptA}}%
               }%
               \wd1=\@tempdima
            \else
               \textwidth=\wd0
               \setbox1 =\vbox{%
                 \noindent\hbox to \wd0{\hfill\GRAPHIC{#5}{#4}{#1}{#2}{#3}\hfill}\\%
                 \noindent\hbox{\QCBOptA}%
               }%
               \wd1=\wd0
            \fi
         \else
            \ifdim\wd0>0pt
              \hsize=\@tempdima
              \setbox1=\vbox{%
                \unskip\GRAPHIC{#5}{#4}{#1}{#2}{0pt}%
                \break
                \unskip\hbox to \@tempdima{\hfill \QCBOptA\hfill}%
              }%
              \wd1=\@tempdima
           \else
              \hsize=\@tempdima
              \setbox1=\vbox{%
                \unskip\GRAPHIC{#5}{#4}{#1}{#2}{0pt}%
              }%
              \wd1=\@tempdima
           \fi
         \fi
         \@tempdimb=\ht1
         \advance\@tempdimb by -#2
         \advance\@tempdimb by #3
         \leavevmode
         \raise -\@tempdimb \hbox{\box1}%
      \fi
      \egroup%
}%
\def\DFRAME#1#2#3#4#5{%
  \vspace\topsep
  \hfil\break
  \bgroup
     \leftskip\@flushglue
	 \rightskip\@flushglue
	 \parindent\z@
	 \parfillskip\z@skip
     \let\QCTOptA\empty
     \let\QCTOptB\empty
     \let\QCBOptA\empty
     \let\QCBOptB\empty
	 \vbox\bgroup
        \ifOverFrame 
           #5\QCTOptA\par
        \fi
        \GRAPHIC{#4}{#3}{#1}{#2}{\z@}%
        \ifUnderFrame 
           \break#5\QCBOptA
        \fi
	 \egroup
  \egroup
  \vspace\topsep
  \break
}%
\def\FFRAME#1#2#3#4#5#6#7{%
  \@ifundefined{floatstyle}
    {
     \begin{figure}[#1]%
    }
    {
	 \ifx#1h
      \begin{figure}[H]%
	 \else
      \begin{figure}[#1]%
	 \fi
	}
  \let\QCTOptA\empty
  \let\QCTOptB\empty
  \let\QCBOptA\empty
  \let\QCBOptB\empty
  \ifOverFrame
    #4
    \ifx\QCTOptA\empty
    \else
      \ifx\QCTOptB\empty
        \caption{\QCTOptA}%
      \else
        \caption[\QCTOptB]{\QCTOptA}%
      \fi
    \fi
    \ifUnderFrame\else
      \label{#5}%
    \fi
  \else
    \UnderFrametrue%
  \fi
  \begin{center}\GRAPHIC{#7}{#6}{#2}{#3}{\z@}\end{center}%
  \ifUnderFrame
    #4
    \ifx\QCBOptA\empty
      \caption{}%
    \else
      \ifx\QCBOptB\empty
        \caption{\QCBOptA}%
      \else
        \caption[\QCBOptB]{\QCBOptA}%
      \fi
    \fi
    \label{#5}%
  \fi
  \end{figure}%
 }%
\def\makeactives{
  \catcode`\"=\active
  \catcode`\;=\active
  \catcode`\:=\active
  \catcode`\'=\active
  \catcode`\~=\active
}
   \gdef\activesoff{%
      \def"{\string"}%
      \def;{\string;}%
      \def:{\string:}%
      \def'{\string'}%
      \def~{\string~}%
    }
\def\FRAME#1#2#3#4#5#6#7#8{%
 \bgroup
 \ifnum\@msidraft=\@ne
   \wasdrafttrue
 \else
   \wasdraftfalse%
 \fi
 \def\LaTeXparams{}%
 \dispkind=\z@
 \def\LaTeXparams{}%
 \doFRAMEparams{#1}%
 \ifnum\dispkind=\z@\IFRAME{#2}{#3}{#4}{#7}{#8}{#5}\else
  \ifnum\dispkind=\@ne\DFRAME{#2}{#3}{#7}{#8}{#5}\else
   \ifnum\dispkind=\tw@
    \edef\@tempa{\noexpand\FFRAME{\LaTeXparams}}%
    \@tempa{#2}{#3}{#5}{#6}{#7}{#8}%
    \fi
   \fi
  \fi
  \ifwasdraft\@msidraft=1\else\@msidraft=0\fi{}%
  \egroup
 }%
\def\TEXUX#1{"texux"}
\def\func#1{\mathop{\rm #1}\nolimits}%
\long\def\QQQ#1#2{%
     \long\expandafter\def\csname#1\endcsname{#2}}%
\long\def\QQA#1#2{}%
\def\QTR#1#2{{\csname#1\endcsname {#2}}}%
\def\EXPAND#1[#2]#3{}%
\def\NOEXPAND#1[#2]#3{}%
\def\LaTeXparent#1{}%
\def\ChildStyles#1{}%
\def\ChildDefaults#1{}%
\def\QTagDef#1#2#3{}%
  \providecommand{\UNICODE}[2][]{\protect\rule{.1in}{.1in}}
  \providecommand{\U}[1]{\protect\rule{.1in}{.1in}}
\def\QQfnmark#1{\footnotemark}
 \def\abstract{%
  \if@twocolumn
   \section*{Abstract (Not appropriate in this style!)}%
   \else \small 
   \begin{center}{\bf Abstract\vspace{-.5em}\vspace{\z@}}\end{center}%
   \quotation 
   \fi
  }%
   \def\registered{\relax\ifmmode{}\r@gistered
                    \else$\m@th\r@gistered$\fi}%
 \def\r@gistered{^{\ooalign
  {\hfil\raise.07ex\hbox{$\scriptstyle\rm\text{R}$}\hfil\crcr
  \mathhexbox20D}}}}{}%
\newdimen\theight
\def\newfmtname{LaTeX2e}
  \DeclareOldFontCommand{\rm}{\normalfont\rmfamily}{\mathrm}
  \DeclareOldFontCommand{\sf}{\normalfont\sffamily}{\mathsf}
  \DeclareOldFontCommand{\tt}{\normalfont\ttfamily}{\mathtt}
  \DeclareOldFontCommand{\bf}{\normalfont\bfseries}{\mathbf}
  \DeclareOldFontCommand{\it}{\normalfont\itshape}{\mathit}
  \DeclareOldFontCommand{\sl}{\normalfont\slshape}{\@nomath\sl}
  \DeclareOldFontCommand{\sc}{\normalfont\scshape}{\@nomath\sc}
\def\alpha{{\Greekmath 010B}}%
\def\beta{{\Greekmath 010C}}%
\def\gamma{{\Greekmath 010D}}%
\def\delta{{\Greekmath 010E}}%
\def\epsilon{{\Greekmath 010F}}%
\def\zeta{{\Greekmath 0110}}%
\def\eta{{\Greekmath 0111}}%
\def\theta{{\Greekmath 0112}}%
\def\iota{{\Greekmath 0113}}%
\def\kappa{{\Greekmath 0114}}%
\def\lambda{{\Greekmath 0115}}%
\def\mu{{\Greekmath 0116}}%
\def\nu{{\Greekmath 0117}}%
\def\xi{{\Greekmath 0118}}%
\def\pi{{\Greekmath 0119}}%
\def\rho{{\Greekmath 011A}}%
\def\sigma{{\Greekmath 011B}}%
\def\tau{{\Greekmath 011C}}%
\def\upsilon{{\Greekmath 011D}}%
\def\phi{{\Greekmath 011E}}%
\def\chi{{\Greekmath 011F}}%
\def\psi{{\Greekmath 0120}}%
\def\omega{{\Greekmath 0121}}%
\def\varepsilon{{\Greekmath 0122}}%
\def\vartheta{{\Greekmath 0123}}%
\def\varpi{{\Greekmath 0124}}%
\def\varrho{{\Greekmath 0125}}%
\def\varsigma{{\Greekmath 0126}}%
\def\varphi{{\Greekmath 0127}}%
\def\nabla{{\Greekmath 0272}}
\def\FindBoldGroup{%
   {\setbox0=\hbox{$\mathbf{x\global\edef\theboldgroup{\the\mathgroup}}$}}%
}
\def\Greekmath#1#2#3#4{%
    \if@compatibility
        \ifnum\mathgroup=\symbold
           \mathchoice{\mbox{\boldmath$\displaystyle\mathchar"#1#2#3#4$}}%
                      {\mbox{\boldmath$\textstyle\mathchar"#1#2#3#4$}}%
                      {\mbox{\boldmath$\scriptstyle\mathchar"#1#2#3#4$}}%
                      {\mbox{\boldmath$\scriptscriptstyle\mathchar"#1#2#3#4$}}%
        \else
           \mathchar"#1#2#3#4%
        \fi 
    \else 
        \FindBoldGroup
        \ifnum\mathgroup=\theboldgroup 
           \mathchoice{\mbox{\boldmath$\displaystyle\mathchar"#1#2#3#4$}}%
                      {\mbox{\boldmath$\textstyle\mathchar"#1#2#3#4$}}%
                      {\mbox{\boldmath$\scriptstyle\mathchar"#1#2#3#4$}}%
                      {\mbox{\boldmath$\scriptscriptstyle\mathchar"#1#2#3#4$}}%
        \else
           \mathchar"#1#2#3#4%
        \fi     	    
	  \fi}
\newif\ifGreekBold  \GreekBoldfalse
\let\SAVEPBF=\pbf
\def\pbf{\GreekBoldtrue\SAVEPBF}%
  \newcounter{equationnumber}  
  \def\mathletters{%
     \addtocounter{equation}{1}
     \edef\@currentlabel{\theequation}%
     \setcounter{equationnumber}{\c@equation}
     \setcounter{equation}{0}%
     \edef\theequation{\@currentlabel\noexpand\alph{equation}}%
  }
    \def\BibTeX{{\rm B\kern-.05em{\sc i\kern-.025em b}\kern-.08em
                 T\kern-.1667em\lower.7ex\hbox{E}\kern-.125emX}}}{}%
\def\AmS{{\protect\usefont{OMS}{cmsy}{m}{n}%
                A\kern-.1667em\lower.5ex\hbox{M}\kern-.125emS}}}{}%
\def\@@eqncr{\let\@tempa\relax
    \ifcase\@eqcnt \def\@tempa{& & &}\or \def\@tempa{& &}%
      \else \def\@tempa{&}\fi
     \@tempa
     \if@eqnsw
        \iftag@
           \@taggnum
        \else
           \@eqnnum\stepcounter{equation}%
        \fi
     \fi
     \global\tag@false
     \global\@eqnswtrue
     \global\@eqcnt\z@\cr}
\def\TCItag{\@ifnextchar*{\@TCItagstar}{\@TCItag}}
\def\@TCItag#1{%
    \global\tag@true
    \global\def\@taggnum{(#1)}}
\def\@TCItagstar*#1{%
    \global\tag@true
    \global\def\@taggnum{#1}}
\def\dprod{\mathop{\displaystyle \prod }}%
\def\ExitTCILatex{\makeatother }
\if@compatibility\message{amsmath already loaded}\fi\aftergroup\ExitTCILatex}
\if@compatibility\message{amstex already loaded}\fi\aftergroup\ExitTCILatex}
\if@compatibility\message{amsgen already loaded}\fi\aftergroup\ExitTCILatex}
\let\DOTSI\relax
\def\RIfM@{\relax\ifmmode}%
\def\FN@{\futurelet\next}%
\def\iint{\DOTSI\intno@\tw@\FN@\ints@}%
\def\iiint{\DOTSI\intno@\thr@@\FN@\ints@}%
\def\iiiint{\DOTSI\intno@4 \FN@\ints@}%
\def\idotsint{\DOTSI\intno@\z@\FN@\ints@}%
\def\ints@{\findlimits@\ints@@}%
\newif\iflimtoken@
\newif\iflimits@
\def\findlimits@{\limtoken@true\ifx\next\limits\limits@true
 \else\ifx\next\nolimits\limits@false\else
 \limtoken@false\ifx\ilimits@\nolimits\limits@false\else
 \ifinner\limits@false\else\limits@true\fi\fi\fi\fi}%
\def\multint@{\int\ifnum\intno@=\z@\intdots@                          
 \else\intkern@\fi                                                    
 \ifnum\intno@>\tw@\int\intkern@\fi                                   
 \ifnum\intno@>\thr@@\int\intkern@\fi                                 
 \int}
\def\multintlimits@{\intop\ifnum\intno@=\z@\intdots@\else\intkern@\fi
 \ifnum\intno@>\tw@\intop\intkern@\fi
 \ifnum\intno@>\thr@@\intop\intkern@\fi\intop}%
\def\intic@{%
    \mathchoice{\hskip.5em}{\hskip.4em}{\hskip.4em}{\hskip.4em}}%
\def\negintic@{\mathchoice
 {\hskip-.5em}{\hskip-.4em}{\hskip-.4em}{\hskip-.4em}}%
\def\ints@@{\iflimtoken@                                              
 \def\ints@@@{\iflimits@\negintic@
   \mathop{\intic@\multintlimits@}\limits                             
  \else\multint@\nolimits\fi                                          
  \eat@}
 \else                                                                
 \def\ints@@@{\iflimits@\negintic@
  \mathop{\intic@\multintlimits@}\limits\else
  \multint@\nolimits\fi}\fi\ints@@@}%
\def\intkern@{\mathchoice{\!\!\!}{\!\!}{\!\!}{\!\!}}%
\def\plaincdots@{\mathinner{\cdotp\cdotp\cdotp}}%
\def\intdots@{\mathchoice{\plaincdots@}%
 {{\cdotp}\mkern1.5mu{\cdotp}\mkern1.5mu{\cdotp}}%
 {{\cdotp}\mkern1mu{\cdotp}\mkern1mu{\cdotp}}%
 {{\cdotp}\mkern1mu{\cdotp}\mkern1mu{\cdotp}}}%
\def\RIfM@{\relax\protect\ifmmode}
\def\text{\RIfM@\expandafter\text@\else\expandafter\mbox\fi}
\let\nfss@text\text
\def\text@#1{\mathchoice
   {\textdef@\displaystyle\f@size{#1}}%
   {\textdef@\textstyle\tf@size{\firstchoice@false #1}}%
   {\textdef@\textstyle\sf@size{\firstchoice@false #1}}%
   {\textdef@\textstyle \ssf@size{\firstchoice@false #1}}%
   \glb@settings}
\def\textdef@#1#2#3{\hbox{{%
                    \everymath{#1}%
                    \let\f@size#2\selectfont
                    #3}}}
\newif\iffirstchoice@
\def\Let@{\relax\iffalse{\fi\let\\=\cr\iffalse}\fi}%
\def\vspace@{\def\vspace##1{\crcr\noalign{\vskip##1\relax}}}%
\def\multilimits@{\bgroup\vspace@\Let@
 \baselineskip\fontdimen10 \scriptfont\tw@
 \advance\baselineskip\fontdimen12 \scriptfont\tw@
 \lineskip\thr@@\fontdimen8 \scriptfont\thr@@
 \lineskiplimit\lineskip
 \vbox\bgroup\ialign\bgroup\hfil$\m@th\scriptstyle{##}$\hfil\crcr}%
\def\Sb{_\multilimits@}%
\def\endSb{\crcr\egroup\egroup\egroup}%
\def\Sp{^\multilimits@}%
\newdimen\ex@
\def\rightarrowfill@#1{$#1\m@th\mathord-\mkern-6mu\cleaders
 \hbox{$#1\mkern-2mu\mathord-\mkern-2mu$}\hfill
 \mkern-6mu\mathord\rightarrow$}%
\def\leftarrowfill@#1{$#1\m@th\mathord\leftarrow\mkern-6mu\cleaders
 \hbox{$#1\mkern-2mu\mathord-\mkern-2mu$}\hfill\mkern-6mu\mathord-$}%
\def\leftrightarrowfill@#1{$#1\m@th\mathord\leftarrow
\mkern-6mu\cleaders
 \hbox{$#1\mkern-2mu\mathord-\mkern-2mu$}\hfill
 \mkern-6mu\mathord\rightarrow$}%
\def\overrightarrow{\mathpalette\overrightarrow@}%
\def\overrightarrow@#1#2{\vbox{\ialign{##\crcr\rightarrowfill@#1\crcr
 \noalign{\kern-\ex@\nointerlineskip}$\m@th\hfil#1#2\hfil$\crcr}}}%
\def\overleftarrow{\mathpalette\overleftarrow@}%
\def\overleftarrow@#1#2{\vbox{\ialign{##\crcr\leftarrowfill@#1\crcr
 \noalign{\kern-\ex@\nointerlineskip}$\m@th\hfil#1#2\hfil$\crcr}}}%
\def\overleftrightarrow{\mathpalette\overleftrightarrow@}%
\def\overleftrightarrow@#1#2{\vbox{\ialign{##\crcr
   \leftrightarrowfill@#1\crcr
 \noalign{\kern-\ex@\nointerlineskip}$\m@th\hfil#1#2\hfil$\crcr}}}%
\def\underrightarrow{\mathpalette\underrightarrow@}%
\def\underrightarrow@#1#2{\vtop{\ialign{##\crcr$\m@th\hfil#1#2\hfil
  $\crcr\noalign{\nointerlineskip}\rightarrowfill@#1\crcr}}}%
\def\underleftarrow{\mathpalette\underleftarrow@}%
\def\underleftarrow@#1#2{\vtop{\ialign{##\crcr$\m@th\hfil#1#2\hfil
  $\crcr\noalign{\nointerlineskip}\leftarrowfill@#1\crcr}}}%
\def\underleftrightarrow{\mathpalette\underleftrightarrow@}%
\def\underleftrightarrow@#1#2{\vtop{\ialign{##\crcr$\m@th
  \hfil#1#2\hfil$\crcr
 \noalign{\nointerlineskip}\leftrightarrowfill@#1\crcr}}}%
\def\qopnamewl@#1{\mathop{\operator@font#1}\nlimits@}
\let\nlimits@\displaylimits
\def\setboxz@h{\setbox\z@\hbox}
\def\varlim@#1#2{\mathop{\vtop{\ialign{##\crcr
 \hfil$#1\m@th\operator@font lim$\hfil\crcr
 \noalign{\nointerlineskip}#2#1\crcr
 \noalign{\nointerlineskip\kern-\ex@}\crcr}}}}
 \def\rightarrowfill@#1{\m@th\setboxz@h{$#1-$}\ht\z@\z@
  $#1\copy\z@\mkern-6mu\cleaders
  \hbox{$#1\mkern-2mu\box\z@\mkern-2mu$}\hfill
  \mkern-6mu\mathord\rightarrow$}
\def\leftarrowfill@#1{\m@th\setboxz@h{$#1-$}\ht\z@\z@
  $#1\mathord\leftarrow\mkern-6mu\cleaders
  \hbox{$#1\mkern-2mu\copy\z@\mkern-2mu$}\hfill
  \mkern-6mu\box\z@$}
\def\projlim{\qopnamewl@{proj\,lim}}
\def\injlim{\qopnamewl@{inj\,lim}}
\def\varinjlim{\mathpalette\varlim@\rightarrowfill@}
\def\varprojlim{\mathpalette\varlim@\leftarrowfill@}
\def\varliminf{\mathpalette\varliminf@{}}
\def\varliminf@#1{\mathop{\underline{\vrule\@depth.2\ex@\@width\z@
   \hbox{$#1\m@th\operator@font lim$}}}}
\def\varlimsup{\mathpalette\varlimsup@{}}
\def\varlimsup@#1{\mathop{\overline
  {\hbox{$#1\m@th\operator@font lim$}}}}
\def\align{\@verbatim \frenchspacing\@vobeyspaces \@alignverbatim
You are using the "align" environment in a style in which it is not defined.}
\let\csname endalign*\endcsname =\endtrivlist
\def\alignat{\@verbatim \frenchspacing\@vobeyspaces \@alignatverbatim
You are using the "alignat" environment in a style in which it is not defined.}
\let\csname endalignat*\endcsname =\endtrivlist
\def\xalignat{\@verbatim \frenchspacing\@vobeyspaces \@xalignatverbatim
You are using the "xalignat" environment in a style in which it is not defined.}
\let\csname endxalignat*\endcsname =\endtrivlist
\def\gather{\@verbatim \frenchspacing\@vobeyspaces \@gatherverbatim
You are using the "gather" environment in a style in which it is not defined.}
\let\csname endgather*\endcsname =\endtrivlist
\def\multiline{\@verbatim \frenchspacing\@vobeyspaces \@multilineverbatim
You are using the "multiline" environment in a style in which it is not defined.}
\let\csname endmultiline*\endcsname =\endtrivlist
\def\arrax{\@verbatim \frenchspacing\@vobeyspaces \@arraxverbatim
You are using a type of "array" construct that is only allowed in AmS-LaTeX.}
\def\tabulax{\@verbatim \frenchspacing\@vobeyspaces \@tabulaxverbatim
You are using a type of "tabular" construct that is only allowed in AmS-LaTeX.}
\let\csname endarrax*\endcsname =\endtrivlist
\let\csname endtabulax*\endcsname =\endtrivlist
 \def\endequation{%
     \ifmmode\ifinner 
      \iftag@
        \addtocounter{equation}{-1} 
        $\hfil
           \displaywidth\linewidth\@taggnum\egroup \endtrivlist
        \global\tag@false
        \global\@ignoretrue   
      \else
        $\hfil
           \displaywidth\linewidth\@eqnnum\egroup \endtrivlist
        \global\tag@false
        \global\@ignoretrue 
      \fi
     \else   
      \iftag@
        \addtocounter{equation}{-1} 
        \eqno \hbox{\@taggnum}
        \global\tag@false%
        $$\global\@ignoretrue
      \else
        \eqno \hbox{\@eqnnum}
        $$\global\@ignoretrue
      \fi
     \fi\fi
 } 
 \newif\iftag@ \tag@false
 \def\TCItag{\@ifnextchar*{\@TCItagstar}{\@TCItag}}
 \def\@TCItag#1{%
     \global\tag@true
     \global\def\@taggnum{(#1)}}
 \def\@TCItagstar*#1{%
     \global\tag@true
     \global\def\@taggnum{#1}}
     \def\tag{\@ifnextchar*{\@tagstar}{\@tag}}
     \def\@tag#1{%
         \global\tag@true
         \global\def\@taggnum{(#1)}}
     \def\@tagstar*#1{%
         \global\tag@true
         \global\def\@taggnum{#1}}
\def\binom#1#2{{#1 \choose #2}}%
\def\dbinom#1#2{{\displaystyle {#1 \choose #2}}}%
\begin{document}

\title{Sums of powers of Fibonacci and Lucas polynomials in terms of
Fibopolynomials}
\author{Claudio de J. Pita Ruiz V. \\
Universidad Panamericana\\
Mexico City, Mexico\\
email: cpita@up.edu.mx}
\date{}
\maketitle

\begin{abstract}
We consider sums of powers of Fibonacci and Lucas polynomials of the form $%
\sum_{n=0}^{q}F_{tsn}^{k}\left( x\right) $ and $\sum_{n=0}^{q}L_{tsn}^{k}%
\left( x\right) $, where $s,t,k$ are given natural numbers, together with
the corresponding alternating sums $\sum_{n=0}^{q}\left( -1\right)
^{n}F_{tsn}^{k}\left( x\right) $ and $\sum_{n=0}^{q}\left( -1\right)
^{n}L_{tsn}^{k}\left( x\right) $. We give conditions on the parameters $%
s,t,k $ for express these sums as some proposed linear combinations of the $%
s $-Fibopolynomials $\binom{q+m}{tk}_{F_{s}\left( x\right) }$, $m=1,2,\ldots
,tk $.
\end{abstract}

\section{Introduction}

We use $\mathbb{N}$ for the natural numbers and $\mathbb{N}^{\prime }$ for $%
\mathbb{N\cup }\left\{ 0\right\} $.

We follow the standard notation $F_{n}\left( x\right) $ for Fibonacci
polynomials and $L_{n}\left( x\right) $ for Lucas polynomials. Binet's
formulas 
\begin{equation}
F_{n}\left( x\right) =\frac{1}{\sqrt{x^{2}+4}}\left( \alpha ^{n}\left(
x\right) -\beta ^{n}\left( x\right) \right) \ \ \ \text{and}\ \ \
L_{n}\left( x\right) =\alpha ^{n}\left( x\right) +\beta ^{n}\left( x\right) ,
\label{1.1}
\end{equation}%
where 
\begin{equation}
\alpha \left( x\right) =\frac{1}{2}\left( x+\sqrt{x^{2}+4}\right) \text{ \ \
\ and \ \ \ }\beta \left( x\right) =\frac{1}{2}\left( x-\sqrt{x^{2}+4}%
\right) \text{,}  \label{1.2}
\end{equation}%
will be used extensively (without further comments). We will use also the
identities%
\begin{eqnarray}
\frac{F_{\left( 2p-1\right) s}\left( x\right) }{F_{s}\left( x\right) }
&=&\sum_{k=0}^{p-1}\left( -1\right) ^{sk}L_{2\left( p-k-1\right) s}\left(
x\right) -\left( -1\right) ^{s\left( p-1\right) },  \label{1.3} \\
\frac{F_{2ps}\left( x\right) }{F_{s}\left( x\right) } &=&\sum_{k=0}^{p-1}%
\left( -1\right) ^{sk}L_{\left( 2p-2k-1\right) s}\left( x\right) ,
\label{1.4} \\
F_{M}\left( x\right) F_{N}\left( x\right) -F_{M+K}\left( x\right)
F_{N-K}\left( x\right) &=&\left( -1\right) ^{N-K}F_{M+K-N}\left( x\right)
F_{K}\left( x\right) ,  \label{1.10}
\end{eqnarray}%
where $p\in \mathbb{N}$ in (\ref{1.3}) and (\ref{1.4}), and $M,N,K\in 
\mathbb{Z}$ in (\ref{1.10}). (Identity (\ref{1.10}) is a version of the
so-called \textquotedblleft index-reduction formula\textquotedblright ; see 
\cite{Jh} for the case $x=1$.) Two variants of (\ref{1.10}) we will use in
section \ref{Sec5} are%
\begin{eqnarray}
F_{M}\left( x\right) L_{N}\left( x\right) -F_{M+K}\left( x\right)
L_{N-K}\left( x\right) &=&\left( -1\right) ^{N-K+1}L_{M+K-N}\left( x\right)
F_{K}\left( x\right) ,  \label{1.101} \\
\left( x^{2}+4\right) F_{M}\left( x\right) F_{N}\left( x\right)
-L_{M+K}\left( x\right) L_{N-K}\left( x\right) &=&\left( -1\right)
^{N-K+1}L_{M+K-N}\left( x\right) L_{K}\left( x\right) .  \label{1.102}
\end{eqnarray}

Given $n\in \mathbb{N}^{\prime }$ and $k\in \left\{ 0,1,\ldots ,n\right\} $,
the $s$-\textit{Fibopolynomial} $\binom{n}{k}_{F_{s}\left( x\right) }$ is
defined by $\binom{n}{0}_{F_{s}\left( x\right) }=\binom{n}{n}_{F_{s}\left(
x\right) }=1$, and%
\begin{equation}
\binom{n}{k}_{F_{s}\left( x\right) }=\frac{F_{sn}\left( x\right) F_{s\left(
n-1\right) }\left( x\right) \cdots F_{s\left( n-k+1\right) }\left( x\right) 
}{F_{s}\left( x\right) F_{2s}\left( x\right) \cdots F_{ks}\left( x\right) }.
\label{1.12}
\end{equation}

(These objects were already used in \cite{Pi3}, where we called them
\textquotedblleft $s$-polyfibonomials\textquotedblright . However, we think
now that \textquotedblleft $s$-Fibopolynomials\textquotedblright\ is a
better name to describe them.)

Plainly we have symmetry for $s$-Fibopolynomials: $\binom{n}{k}_{F_{s}\left(
x\right) }=\binom{n}{n-k}_{F_{s}\left( x\right) }$. We can use the identity%
\begin{equation*}
F_{s\left( n-k\right) +1}\left( x\right) F_{sk}\left( x\right)
+F_{sk-1}\left( x\right) F_{s\left( n-k\right) }\left( x\right)
=F_{sn}\left( x\right) ,
\end{equation*}%
(which comes from (\ref{1.10}) with $M=sn$, $N=1$ and $K=-sk+1$), to
conclude that%
\begin{equation}
\binom{n}{k}_{F\!_{s}\!\left( x\right) }=F_{s\left( n-k\right) +1}\left(
x\right) \binom{n-1}{k-1}_{F\!_{s}\!\left( x\right) }+F_{sk-1}\left(
x\right) \binom{n-1}{k}_{F\!_{s}\!\left( x\right) }.  \label{1.14}
\end{equation}

Formula (\ref{1.14}), together with a simple induction argument, shows that $%
s$-Fibopolynomials are indeed polynomials (with $\deg \binom{n}{k}%
_{F\!_{s}\!\left( x\right) }=sk\left( n-k\right) $). The case $s=x=1$
corresponds to Fibonomials $\binom{n}{k}_{F\!\!}$ \negthinspace , introduced
by V. E. Hoggatt, Jr. \cite{Hog} in 1967 (see also \cite{T-F}), and the case 
$x=1$ corresponds to $s$-Fibonomials $\binom{n}{k}_{F_{s}}$, first mentioned
also in \cite{Hog}, and studied recently in \cite{Pi2}. We comment in
passing that Fibonomials are important mathematical objects involved in many
interesting research works during the last few decades (see \cite{Go}, \cite%
{Ki0}, \cite{Ki05}, \cite{Ki2}, \cite{Tr}, to mention some). 

The well-known identity 
\begin{equation}
\sum_{n=0}^{q}F_{n}^{2}=F_{q}F_{q+1}=\binom{q+1}{2}_{F},  \label{1.15}
\end{equation}%
was the initial motivation for this work. We will see that (\ref{1.15}) is
just a particular case of the following polynomial identities ((\ref{4.27})
in section \ref{Sec4}) 
\begin{equation*}
\left( -1\right) ^{\left( s+1\right) q}L_{s}\left( x\right)
\sum_{n=0}^{q}\left( -1\right) ^{\left( s+1\right) n}F_{sn}^{2}\left(
x\right) =\left( -1\right) ^{sq}F_{s}\left( x\right) \sum_{n=0}^{q}\left(
-1\right) ^{sn}F_{2sn}\left( x\right) =F_{s\left( q+1\right) }\left(
x\right) F_{sq}\left( x\right) .
\end{equation*}

To find closed formulas for sums of powers of Fibonacci and Lucas numbers $%
\sum_{n=0}^{q}F_{n}^{k}$ and $\sum_{n=0}^{q}L_{n}^{k}$, and for the
corresponding alternating sums of powers $\sum_{n=0}^{q}\left( -1\right)
^{n}F_{n}^{k}$ and $\sum_{n=0}^{q}\left( -1\right) ^{n}L_{n}^{k}$, is a
challenging problem that has been in the interest of many mathematicians
along the years (see \cite{Be}, \cite{C-He}, \cite{Mel1}, \cite{Mel2}, \cite%
{O-N}, \cite{Oz}, \cite{Pr}, to mention some). There are also some works
considering variants of these sums and/or generalizations (in some sense) of
them (see \cite{C-H}, \cite{Ki1}, \cite{Mel3}, \cite{S}, among many others).
This work presents, on one hand, a generalization of the problem mentioned
above, considering Fibonacci and Lucas polynomials (instead of numbers) and
involving more parameters in the sums. On the other hand, we are not
interested in any closed formulas for these sums, but only in sums that can
be written as certain linear combinations of certain $s$-Fibopolynomials (as
in (\ref{1.15})). More precisely, in this work we obtain sufficient
conditions (on the positive integer parameters $t,k,s$), for the polynomial
sums of powers $\sum_{n=0}^{q}F_{tsn}^{k}\left( x\right) $, $%
\sum_{n=0}^{q}L_{tsn}^{k}\left( x\right) $, $\sum_{n=0}^{q}\left( -1\right)
^{n}F_{tsn}^{k}\left( x\right) $ and $\sum_{n=0}^{q}\left( -1\right)
^{n}L_{tsn}^{k}\left( x\right) $, can be expressed as linear combinations of
the $s$-Fibopolynomials $\binom{q+m}{tk}_{F_{s}\left( x\right) }$, $%
m=1,2,\ldots ,tk$, according to some proposed expressions ((\ref{3.3}), (\ref%
{3.23}), (\ref{4.5}) and (\ref{4.6}), respectively). (We conjecture that
these sufficient conditions are also necessary, see remark \ref{Remark}.) In
section \ref{Sec2} we recall some facts about $Z$ transform, since some
results related to the $Z$ transform of the sequences $\left\{
F_{tsn}^{k}\left( x\right) \right\} _{n=0}^{\infty }$ and $\left\{
L_{tsn}^{k}\left( x\right) \right\} _{n=0}^{\infty }$ (obtained in a
previous work) are the starting point of the results in this work. The main
results are presented in sections \ref{Sec3} and \ref{Sec4}. Propositions %
\ref{Prop3.1} and \ref{Prop3.2} in section \ref{Sec3} contain, respectively,
sufficient conditions on the positive integers $t,k,s$ for the sums of
powers $\sum_{n=0}^{q}F_{tsn}^{k}\left( x\right) $ and $%
\sum_{n=0}^{q}L_{tsn}^{k}\left( x\right) $ can be written as linear
combinations of the mentioned $s$-Fibopolynomials, and propositions \ref%
{Prop4.1} and \ref{Prop4.2} in section \ref{Sec4} contain, respectively,
sufficient conditions on the positive integers $t,k,s$ for the alternating
sums of powers $\sum_{n=0}^{q}\left( -1\right) ^{n}F_{tsn}^{k}\left(
x\right) $ and $\sum_{n=0}^{q}\left( -1\right) ^{n}L_{tsn}^{k}\left(
x\right) $ can be written as linear combinations of those $s$%
-Fibopolynomials. Surprisingly, there are some intersections on the
conditions on $t$ and $k$ in propositions \ref{Prop3.1} and \ref{Prop4.1}
(and also in propositions \ref{Prop3.2} and \ref{Prop4.2}), allowing us to
write results for sums of powers of the form $\sum_{n=0}^{q}\left( -1\right)
^{sn}F_{tsn}^{k}\left( x\right) $ or $\sum_{n=0}^{q}\left( -1\right)
^{\left( s+1\right) n}F_{tsn}^{k}\left( x\right) $ (and similar sums for
Lucas polynomials), that work at the same time for sums $%
\sum_{n=0}^{q}F_{tsn}^{k}\left( x\right) $ and alternating sums $%
\sum_{n=0}^{q}\left( -1\right) ^{n}F_{tsn}^{k}\left( x\right) $ as well,
depending on the parity of $s$. These results are presented in section \ref%
{Sec4}: corollaries \ref{Cor4.1} (for the Fibonacci case) and \ref{Cor4.2}
(for the Lucas case). Finally, in section \ref{Sec5} we show some examples
of identities obtained as derivatives of some of the results obtained in
previous sections.

\section{\label{Sec2}Preliminaries}

The $Z$ transform maps complex sequences $\left\{ a_{n}\right\}
_{n=0}^{\infty }$ into holomorphic functions $A:U\subset \mathbb{%
C\rightarrow C}$, defined by the Laurent series $A\left( z\right)
=\sum_{n=0}^{\infty }a_{n}z^{-n}$ (also written as $\mathcal{Z}\left(
a_{n}\right) $, defined outside the closure of the disk of convergence of
the Taylor series $\sum_{n=0}^{\infty }a_{n}z^{n}$). We also write $a_{n}=%
\mathcal{Z}^{-1}\left( A\left( z\right) \right) $ and we say the the
sequence $\left\{ a_{n}\right\} _{n=0}^{\infty }$ is the inverse $Z$
transform of $A\left( z\right) $. Some basic facts we will need are the
following:

(a) $\mathcal{Z}$ is linear and injective (same for $\mathcal{Z}^{-1}$).

(b) If $\left\{ a_{n}\right\} _{n=0}^{\infty }$ is a sequence with $Z$
transform $A\left( z\right) $, then the $Z$ transform of the sequence $%
\left\{ \left( -1\right) ^{n}a_{n}\right\} _{n=0}^{\infty }$ is 
\begin{equation}
\mathcal{Z}\left( \left( -1\right) ^{n}a_{n}\right) =A\left( -z\right) .
\label{2.3}
\end{equation}

(c) If $\left\{ a_{n}\right\} _{n=0}^{\infty }$ is a sequence with $Z$
transform $A\left( z\right) $, then the $Z$ transform of the sequence $%
\left\{ na_{n}\right\} _{n=0}^{\infty }$ is 
\begin{equation}
\mathcal{Z}\left( na_{n}\right) =-z\frac{d}{dz}A\left( z\right) .
\label{2.31}
\end{equation}

Plainly we have (for given $\lambda \in \mathbb{C}$, $\lambda \neq 0$)%
\begin{equation}
\mathcal{Z}\left( \lambda ^{n}\right) =\frac{z}{z-\lambda }.  \label{2.1}
\end{equation}

For example, if $t,k\in \mathbb{N}^{\prime }$ are given, we can write the
generic term of the sequence $\left\{ F_{tsn}^{k}\left( x\right) \right\}
_{n=0}^{\infty }$ as%
\begin{eqnarray}
F_{tsn}^{k}\left( x\right) &=&\left( \frac{1}{\sqrt{x^{2}+4}}\left( \alpha
^{tsn}\left( x\right) -\beta ^{tsn}\left( x\right) \right) \right) ^{k}
\label{2.4} \\
&=&\frac{1}{\left( x^{2}+4\right) ^{\frac{k}{2}}}\sum_{l=0}^{k}\binom{k}{l}%
\left( -1\right) ^{k-l}\left( \alpha ^{tsl}\left( x\right) \beta ^{ts\left(
k-l\right) }\left( x\right) \right) ^{n}.  \notag
\end{eqnarray}

The linearity of $\mathcal{Z}$ and (\ref{2.1}) give us%
\begin{equation}
\mathcal{Z}\left( F_{tsn}^{k}\left( x\right) \right) =\frac{1}{\left(
x^{2}+4\right) ^{\frac{k}{2}}}\sum_{l=0}^{k}\binom{k}{l}\left( -1\right)
^{k-l}\frac{z}{z-\alpha ^{tsl}\left( x\right) \beta ^{ts\left( k-l\right)
}\left( x\right) }.  \label{2.5}
\end{equation}

Similarly, since the generic term of the sequence $\left\{ L_{tsn}^{k}\left(
x\right) \right\} _{n=0}^{\infty }$ can be expressed as%
\begin{equation}
L_{tsn}^{k}\left( x\right) =\left( \alpha ^{tsn}\left( x\right) +\beta
^{tsn}\left( x\right) \right) ^{k}=\sum_{l=0}^{k}\binom{k}{l}\left( \alpha
^{tsl}\left( x\right) \beta ^{ts\left( k-l\right) }\left( x\right) \right)
^{n},  \label{2.6}
\end{equation}%
we have that%
\begin{equation}
\mathcal{Z}\left( L_{tsn}^{k}\left( x\right) \right) =\sum_{l=0}^{k}\binom{k%
}{l}\frac{z}{z-\alpha ^{tsl}\left( x\right) \beta ^{ts\left( k-l\right)
}\left( x\right) }.  \label{2.7}
\end{equation}

Observe that formulas 
\begin{equation}
\mathcal{Z}\left( F_{n}\left( x\right) \right) =\frac{z}{z^{2}-xz-1}\text{ \
\ \ \ and \ \ \ \ \ }\mathcal{Z}\left( L_{n}\left( x\right) \right) =\frac{%
z\left( 2z-x\right) }{z^{2}-xz-1},  \label{2.75}
\end{equation}%
are the simplest cases ($k=t=s=1$) of (\ref{2.5}) and (\ref{2.7}),
respectively.

In a recent work \cite{Pi4} (inspired by \cite{Hor}, among others), we
proved that expressions (\ref{2.5}) and (\ref{2.7}) can be written in a
special form. The result is that (\ref{2.5}) can be written as%
\begin{equation}
\mathcal{Z}\left( F_{tsn}^{k}\left( x\right) \right) =z\frac{%
\sum_{i=0}^{tk}\sum_{j=0}^{i}\left( -1\right) ^{\frac{\left(
sj+2(s+1)\right) \left( j+1\right) }{2}}\dbinom{tk+1}{j}_{F_{s}\left(
x\right) }F_{ts\left( i-j\right) }^{k}\left( x\right) z^{tk-i}}{%
\sum_{i=0}^{tk+1}\left( -1\right) ^{\frac{\left( si+2(s+1)\right) \left(
i+1\right) }{2}}\dbinom{tk+1}{i}_{F_{s}\left( x\right) }z^{tk+1-i}},
\label{2.71}
\end{equation}%
and (\ref{2.7}) can be written as%
\begin{equation}
\mathcal{Z}\left( L_{tsn}^{k}\left( x\right) \right) =z\frac{%
\sum_{i=0}^{tk}\sum_{j=0}^{i}\left( -1\right) ^{\frac{\left(
sj+2(s+1)\right) \left( j+1\right) }{2}}\dbinom{tk+1}{j}_{F_{s}\left(
x\right) }L_{ts\left( i-j\right) }^{k}\left( x\right) z^{tk-i}}{%
\sum_{i=0}^{tk+1}\left( -1\right) ^{\frac{\left( si+2(s+1)\right) \left(
i+1\right) }{2}}\dbinom{tk+1}{i}_{F_{s}\left( x\right) }z^{tk+1-i}}.
\label{2.72}
\end{equation}

From (\ref{2.71}) and (\ref{2.72}) we obtained that $F_{tsn}^{k}\left(
x\right) $ and $L_{tsn}^{k}\left( x\right) $ can be expressed as linear
combinations of the $s$-Fibopolynomials $\binom{n+tk-i}{tk}_{F_{s}\left(
x\right) }$, $i=0,1,\ldots ,tk$, according to%
\begin{equation}
F_{tsn}^{k}\left( x\right) =\left( -1\right)
^{s+1}\sum_{i=0}^{tk}\sum_{j=0}^{i}\left( -1\right) ^{\frac{\left(
sj+2(s+1)\right) \left( j+1\right) }{2}}\dbinom{tk+1}{j}_{F_{s}\left(
x\right) }F_{ts\left( i-j\right) }^{k}\left( x\right) \dbinom{n+tk-i}{tk}%
_{F_{s}\left( x\right) },  \label{2.81}
\end{equation}%
and%
\begin{equation}
L_{tsn}^{k}\left( x\right) =\left( -1\right)
^{s+1}\sum_{i=0}^{tk}\sum_{j=0}^{i}\left( -1\right) ^{\frac{\left(
sj+2(s+1)\right) \left( j+1\right) }{2}}\dbinom{tk+1}{j}_{F_{s}\left(
x\right) }L_{ts\left( i-j\right) }^{k}\left( x\right) \dbinom{n+tk-i}{tk}%
_{F_{s}\left( x\right) }.  \label{2.82}
\end{equation}

The denominator in (\ref{2.71}) (or (\ref{2.72})) is a $\left( tk+1\right) $%
-th degree $z$-polynomial, which we denote as $D_{s,tk+1}\left( x,z\right) $%
, that can be factored as%
\begin{equation}
\sum_{i=0}^{tk+1}\left( -1\right) ^{\frac{\left( si+2(s+1)\right) \left(
i+1\right) }{2}}\dbinom{tk+1}{i}_{F_{s}\left( x\right) }z^{tk+1-i}=\left(
-1\right) ^{s+1}\dprod\limits_{j=0}^{tk}\left( z-\alpha ^{sj}\left( x\right)
\beta ^{s\left( tk-j\right) }\left( x\right) \right) .  \label{2.10}
\end{equation}

(See proposition 1 in \cite{Pi4}.) Moreover, if $tk$ is even, $tk=2p$ say,
then (\ref{2.10}) can be written as%
\begin{equation}
D_{s,2p+1}\left( x;z\right) =\left( -1\right) ^{s+1}\left( z-\left(
-1\right) ^{sp}\right) \prod_{j=0}^{p-1}\left( z^{2}-\left( -1\right)
^{sj}L_{2s\left( p-j\right) }\left( x\right) z+1\right) ,  \label{2.11}
\end{equation}%
and if $tk$ is odd, $tk=2p-1$ say, we have%
\begin{equation}
D_{s,2p}\left( x;z\right) =\left( -1\right) ^{s+1}\prod_{j=0}^{p-1}\left(
z^{2}-\left( -1\right) ^{sj}L_{s\left( 2p-1-2j\right) }\left( x\right)
z+\left( -1\right) ^{\left( 2p-1\right) s}\right) .  \label{2.12}
\end{equation}

(See (40) and (41) in \cite{Pi4}.)

\section{\label{Sec3}The main results (I)}

Let us consider first the Fibonacci case. From (\ref{2.81}) we can write the
sum $\sum_{n=0}^{q}F_{tsn}^{k}\left( x\right) $ in terms of a sum of $s$%
-Fibopolynomials in a trivial way, namely%
\begin{equation}
\sum_{n=0}^{q}F_{tsn}^{k}\left( x\right) =\left( -1\right)
^{s+1}\sum_{i=0}^{tk}\sum_{j=0}^{i}\left( -1\right) ^{\frac{\left(
sj+2(s+1)\right) \left( j+1\right) }{2}}\dbinom{tk+1}{j}_{F_{s}\left(
x\right) }F_{ts\left( i-j\right) }^{k}\left( x\right) \sum_{n=0}^{q}\dbinom{%
n+tk-i}{tk}_{F_{s}\left( x\right) }.  \label{3.1}
\end{equation}

The point is that we can write (\ref{3.1}) as%
\begin{eqnarray}
\sum_{n=0}^{q}F_{tsn}^{k}\left( x\right) &=&\left( -1\right)
^{s+1}\sum_{m=1}^{tk}\sum_{i=0}^{tk-m}\sum_{j=0}^{i}\left( -1\right) ^{\frac{%
\left( sj+2(s+1)\right) \left( j+1\right) }{2}}\dbinom{tk+1}{j}_{F_{s}\left(
x\right) }F_{ts\left( i-j\right) }^{k}\left( x\right) \dbinom{q+m}{tk}%
_{F_{s}\left( x\right) }  \label{3.2} \\
&&+\left( -1\right) ^{s+1}\sum_{i=0}^{tk}\sum_{j=0}^{i}\left( -1\right) ^{%
\frac{\left( sj+2(s+1)\right) \left( j+1\right) }{2}}\dbinom{tk+1}{j}%
_{F_{s}\left( x\right) }F_{ts\left( i-j\right) }^{k}\left( x\right)
\sum_{n=0}^{q}\dbinom{n}{tk}_{F_{s}\left( x\right) }.  \notag
\end{eqnarray}

Expression (\ref{3.2}) tells us that the sum $\sum_{n=0}^{q}F_{tsn}^{k}%
\left( x\right) $ can be written as a linear combination of the $s$%
-Fibopolynomials $\binom{q+m}{tk}_{F_{s}\left( x\right) }$, $m=1,2,\ldots
,tk $, according to 
\begin{equation}
\sum_{n=0}^{q}F_{tsn}^{k}\left( x\right) =\left( -1\right)
^{s+1}\sum_{m=1}^{tk}\sum_{i=0}^{tk-m}\sum_{j=0}^{i}\left( -1\right) ^{\frac{%
\left( sj+2(s+1)\right) \left( j+1\right) }{2}}\dbinom{tk+1}{j}_{F_{s}\left(
x\right) }F_{ts\left( i-j\right) }^{k}\left( x\right) \dbinom{q+m}{tk}%
_{F_{s}\left( x\right) },  \label{3.3}
\end{equation}%
if and only if 
\begin{equation}
\sum_{i=0}^{tk}\sum_{j=0}^{i}\left( -1\right) ^{\frac{\left(
sj+2(s+1)\right) \left( j+1\right) }{2}}\dbinom{tk+1}{j}_{F_{s}\left(
x\right) }F_{ts\left( i-j\right) }^{k}\left( x\right) =0.  \label{3.4}
\end{equation}

Observe that from (\ref{2.5}) and (\ref{2.71}) we can write%
\begin{eqnarray}
&&\sum_{i=0}^{tk}\sum_{j=0}^{i}\left( -1\right) ^{\frac{\left(
sj+2(s+1)\right) \left( j+1\right) }{2}}\dbinom{tk+1}{j}_{F_{s}\left(
x\right) }F_{ts\left( i-j\right) }^{k}\left( x\right) z^{tk-i}  \label{3.5}
\\
&=&\!\frac{1}{\left( x^{2}+4\right) ^{\frac{k}{2}}}\!\left( \sum_{l=0}^{k}%
\binom{k}{l}\!\left( -1\right) ^{k-l}\frac{1}{z-\alpha ^{lts}\!\left(
x\right) \beta ^{\left( k-l\right) ts}\!\left( x\right) }\right) \!\!\left(
\sum_{i=0}^{tk+1}\left( -1\right) ^{\frac{\left( si+2(s+1)\right) \left(
i+1\right) }{2}}\dbinom{tk+1}{i}_{F_{s}\left( x\right) }\!z^{tk+1-i}\right) .
\notag
\end{eqnarray}

Let us consider the factors in parentheses of the right-hand side of (\ref%
{3.5}), namely%
\begin{equation}
\Pi _{1}\left( x,z\right) =\sum_{l=0}^{k}\binom{k}{l}\left( -1\right) ^{k-l}%
\frac{1}{z-\alpha ^{lts}\left( x\right) \beta ^{\left( k-l\right) ts}\left(
x\right) },  \label{3.6}
\end{equation}%
and 
\begin{equation}
\Pi _{2}\left( x,z\right) =\sum_{i=0}^{tk+1}\left( -1\right) ^{\frac{\left(
si+2(s+1)\right) \left( i+1\right) }{2}}\dbinom{tk+1}{i}_{F_{s}\left(
x\right) }z^{tk+1-i}.  \label{3.7}
\end{equation}

Clearly any of the conditions%
\begin{equation}
\Pi _{1}\left( x,1\right) =0,  \label{3.81}
\end{equation}%
or 
\begin{equation}
\begin{array}{ccc}
\Pi _{1}\left( x,1\right) <\infty & \text{and} & \Pi _{2}\left( x,1\right)
=0.%
\end{array}
\label{3.82}
\end{equation}%
imply (\ref{3.4}).

\begin{proposition}
\label{Prop3.1}The sum $\sum_{n=1}^{q}F_{tsn}^{k}\left( x\right) $ can be
written as a linear combination of the $s$-Fibopolynomials $\binom{q+m}{tk}%
_{F_{s}\left( x\right) }$, $m=1,2,\ldots ,tk$, according to (\ref{3.3}), in
the following cases%
\begin{equation*}
\begin{tabular}{|c||c|c|c|}
\hline
& $t$ & $k$ & $s$ \\ \hline\hline
(a) & even & odd & even \\ \hline
(b) & odd & $\equiv 2\func{mod}4$ & odd \\ \hline
(c) & $\equiv 0\func{mod}4$ & odd & any \\ \hline
\end{tabular}%
\end{equation*}
\end{proposition}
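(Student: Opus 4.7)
From (\ref{3.2}) the equation (\ref{3.3}) is equivalent to the vanishing condition (\ref{3.4}), so the task reduces to establishing (\ref{3.4}) in each of the three regimes. The core observation is that, evaluating identity (\ref{3.5}) at $z=1$, the left side of (\ref{3.4}) equals $(x^{2}+4)^{-k/2}\,\Pi_{1}(x,1)\,\Pi_{2}(x,1)$; hence it suffices to verify (\ref{3.81}) or (\ref{3.82}). A preliminary analysis, using $\alpha(x)\beta(x)=-1$, shows that $\alpha^{a}\beta^{b}=(-1)^{b}\alpha^{a-b}$ (as a function of $x$) equals the constant $1$ if and only if $a=b$ and $b$ is even. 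Applied to the summands of (\ref{3.6}) this tells us that $\Pi_{1}(x,1)$ has a pole at $l\in\{0,\ldots,k\}$ exactly when $l=k/2$ and $(k/2)ts$ is even; applied to the factorization (\ref{2.10}), it tells us that $\Pi_{2}(x,1)=0$ exactly when there is a $j\in\{0,\ldots,tk\}$ with $j=tk/2$ and $s\cdot tk/2$ even.

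Cases (a) and (c) I will handle via (\ref{3.82}). In both cases $k$ is odd, so the index $l=k/2$ does not exist and $\Pi_{1}(x,1)$ is finite. For $\Pi_{2}(x,1)=0$: in case (a), $t$ even makes $tk/2$ an integer and $s$ even forces $s\cdot tk/2$ to be even; in case (c), $t\equiv 0\pmod{4}$ already makes $tk/2$ even, so $s\cdot tk/2$ is even for any $s$. Condition (\ref{3.82}) holds in both cases, so (\ref{3.4}) follows.

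Case (b) is more subtle: $k\equiv 2\pmod{4}$ is even and all of $k/2$, $t$, $s$ are odd, so $s\cdot tk/2$ is odd and $\Pi_{2}(x,1)\neq 0$. I therefore aim for (\ref{3.81}) directly via a symmetry argument. Writing $u_{l}=\alpha^{lts}\beta^{(k-l)ts}$ and using $(-1)^{k-l}=(-1)^{l}$ (valid since $k$ is even), the substitution $l\mapsto k-l$ in $\Pi_{1}(x,1)$ yields
\[
2\Pi_{1}(x,1)=\sum_{l=0}^{k}\binom{k}{l}(-1)^{l}\!\left[\frac{1}{1-u_{l}}+\frac{1}{1-u_{k-l}}\right].
\]
Since $u_{l}u_{k-l}=(\alpha\beta)^{kts}=(-1)^{kts}$ and $kts$ is even in case (b) (because $k$ is even), we get $u_{k-l}=1/u_{l}$, and the elementary identity $\tfrac{1}{1-u}+\tfrac{1}{1-1/u}=1$ collapses the bracket to the constant $1$, whence $2\Pi_{1}(x,1)=\sum_{l=0}^{k}\binom{k}{l}(-1)^{l}=0$.

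The main obstacle is the self-paired index $l=k/2$ in case (b), where $u_{l}=u_{k-l}$ and the manipulation $u_{k-l}=1/u_{l}$ degenerates to $u_{k/2}^{2}=1$. Here I will check directly that $u_{k/2}=(-1)^{(k/2)ts}=-1$, using that $k/2$, $t$ and $s$ are all odd; this keeps $\Pi_{1}$ regular at this index (consistent with the pole analysis above) and makes the bracket contribute $\tfrac12+\tfrac12=1$, matching the general formula. Once this compatibility at $l=k/2$ is verified, the symmetry computation goes through without exception and the three-case proof is complete.
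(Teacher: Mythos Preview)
Your proof is correct and follows the same overall strategy as the paper: reduce to (\ref{3.4}) and then, via (\ref{3.5}), verify in each case either $\Pi_1(x,1)=0$ or $\Pi_1(x,1)<\infty$ together with $\Pi_2(x,1)=0$.

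The implementation differs in minor but pleasant ways. For $\Pi_2(x,1)=0$ in cases (a) and (c) the paper invokes the grouped factorization (\ref{2.11}) and reads off the linear factor $\bigl(z-(-1)^{kts/2}\bigr)$; you instead use the root product (\ref{2.10}) directly and check when some $\alpha^{sj}\beta^{s(tk-j)}$ equals $1$, which is equivalent and avoids citing (\ref{2.11}). For the finiteness of $\Pi_1(x,1)$ in (a) and (c), the paper actually computes the closed form (\ref{3.10}) by pairing $l\leftrightarrow k-l$, whereas your ``no index $l=k/2$ since $k$ is odd'' is all that is needed. In case (b) the paper simply asserts the collapsed binomial identity $\sum_{l=0}^{2k-2}\binom{2(2k-1)}{l}(-1)^l-\tfrac12\binom{2(2k-1)}{2k-1}=0$; your symmetry argument via $u_{k-l}=1/u_l$ and $\tfrac{1}{1-u}+\tfrac{1}{1-1/u}=1$ is exactly what lies behind that formula, and your separate check that $u_{k/2}=-1$ (so the self-paired term contributes $\tfrac12+\tfrac12=1$) is the correct way to handle the one delicate index. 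The two proofs are thus the same in substance; yours is a bit more self-contained.
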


\begin{proof}
Observe that in each of the three cases the product $tk$ is even. Then,
according to (\ref{2.11}) we can write%
\begin{equation}
\Pi _{2}\left( x,z\right) =\left( -1\right) ^{s+1}\left( z-\left( -1\right)
^{\frac{kts}{2}}\right) \prod\limits_{j=0}^{\frac{tk}{2}-1}\left(
z^{2}-\left( -1\right) ^{sj}L_{2s\left( \frac{tk}{2}-j\right) }\left(
x\right) z+1\right) .  \label{3.9}
\end{equation}

(a) Let us suppose that $t$ is even, $k$ is odd and $s$ is even. In this
case the factor $\left( z-\left( -1\right) ^{\frac{kts}{2}}\right) $ of the
right-hand side of (\ref{3.9}) is $\left( z-1\right) $, so we have $\Pi
_{2}\left( x,1\right) =0$. It remains to check that $\Pi _{1}\left(
x,1\right) $ is finite. In fact, by writing $k$ as $2k-1$, and using that $t$
and $s$ are even, one can check that%
\begin{equation}
\Pi _{1}\left( x,1\right) =\sqrt{x^{2}+4}\sum_{l=0}^{k-1}\binom{2k-1}{l}%
\left( -1\right) ^{l+1}\frac{F_{\left( 2k-1-2l\right) ts}\left( x\right) }{%
2-L_{\left( 2k-1-2l\right) ts}\left( x\right) },  \label{3.10}
\end{equation}%
so we have that $\Pi _{1}\left( x,1\right) $ is finite, and then the
right-hand side of (\ref{3.5}) is equal to zero when $z=1$, as wanted.

(b) Suppose now that $t$ is odd, $k\equiv 2\func{mod}4$ and that $s$ is odd.
In this case the factor $\left( z-\left( -1\right) ^{\frac{kts}{2}}\right) $
of the right-hand side of (\ref{3.9}) is $\left( z+1\right) $, so $\Pi
_{2}\left( x,1\right) \neq 0$. However, by writing $k$ as $2\left(
2k-1\right) $ and using that $t$ and $s$ are odd, we can see that 
\begin{equation*}
\Pi _{1}\left( x,1\right) =\sum_{l=0}^{2k-2}\binom{2\left( 2k-1\right) }{l}%
\left( -1\right) ^{l}-\frac{1}{2}\binom{2\left( 2k-1\right) }{2k-1}=0.
\end{equation*}

Thus, the right-hand side of (\ref{3.5}) is equal to $0$ when $z=1$, as
wanted.

(c) Let us suppose that $t\equiv 0\func{mod}4$, $k$ is odd, and $s$ is any
positive integer. In this case the factor $\left( z-\left( -1\right) ^{\frac{%
kts}{2}}\right) $ of the right-hand side of (\ref{3.9}) is $\left(
z-1\right) $, so we have $\Pi _{2}\left( x,1\right) =0$. By writing $k$ as $%
2k-1$, and using that $t$ is multiple of $4$, we can see that formula (\ref%
{3.10}) is valid for any $s\in \mathbb{N}$, so we conclude that $\Pi
_{1}\left( x,1\right) $ is finite. Thus the right-hand side of (\ref{3.5})
is $0$ when $z=1$, as wanted.
\end{proof}

An example from the case (c) of proposition \ref{Prop3.1} is the following
identity (corresponding to $t=4$ and $k=1$), valid for any $s\in \mathbb{N}$%
\begin{equation}
\sum_{n=0}^{q}F_{4sn}\left( x\right) =F_{4s}\left( x\right) \left( \dbinom{%
q+1}{4}_{F_{s}\left( x\right) }+\left( -1\right) ^{s+1}L_{2s}\left( x\right) 
\dbinom{q+2}{4}_{F_{s}\left( x\right) }+\dbinom{q+3}{4}_{F_{s}\left(
x\right) }\right) .  \label{3.18}
\end{equation}

\begin{remark}
\label{Remark}A natural question about proposition \ref{Prop3.1} is wether
the given conditions on $t$, $k$ and $s$ are also necessary (for expressing
the sum $\sum_{n=1}^{q}F_{tsn}^{k}\left( x\right) $ as a linear combination
of the $s$-Fibopolynomials $\binom{q+m}{tk}_{F_{s}\left( x\right) }$, $%
m=1,2,\ldots ,tk$, according to (\ref{3.3})). We believe that the answer is
yes, and we think that this conjecture (together with similar conjectures in
propositions \ref{Prop3.2}, \ref{Prop4.1} and \ref{Prop4.2}) can be a good
topic for a future work. Nevertheless, we would like to make some comments
about this point in the case of proposition \ref{Prop3.1}. The cases where
we do not have the conditions on $t,k,s$ stated in proposition \ref{Prop3.1}
are the following%
\begin{equation*}
\begin{tabular}{|c||c|c|c|c|c|c|c|}
\hline
& (i) & (ii) & (iii) & (iv) & (v) & (vi) & (vii) \\ \hline
$t$ & e & e & o & $\equiv 2\func{mod}4$ & o & o & o \\ \hline
$k$ & e & e & e & o & $\equiv 0\func{mod}4$ & o & o \\ \hline
$s$ & e & o & e & o & o & e & o \\ \hline
\end{tabular}%
\end{equation*}

Thus, in order to prove the necessity of the mentioned conditions we need to
show that (\ref{3.4}) \textit{does not hold} in each of these 7 cases. For
example, the case $t=1$ and $k=3$ is included in (vi) and (vii). In this
case the left-hand side of (\ref{3.4}) is (for any $s\in \mathbb{N}$)%
\begin{equation*}
\sum_{i=0}^{3}\sum_{j=0}^{i}\left( -1\right) ^{\frac{\left( sj+2(s+1)\right)
\left( j+1\right) }{2}}\dbinom{4}{j}_{F_{s}\left( x\right) }F_{s\left(
i-j\right) }^{3}\left( x\right) =-F_{s}^{3}\left( x\right) \left(
2L_{s}\left( x\right) +1+\left( -1\right) ^{s}\right) .
\end{equation*}

That is, (\ref{3.4}) does not hold, which means that the sum of cubes $%
\sum_{n=0}^{q}F_{sn}^{3}\left( x\right) $ can not be written as the linear
combination of the $s$-Fibopolynomials $\binom{q+1}{3}_{F_{s}\left( x\right)
}$ $,$ $\binom{q+2}{3}_{F_{s}\left( x\right) }$ and $\binom{q+3}{3}%
_{F_{s}\left( x\right) }$ proposed in (\ref{3.3}). However, it is known that 
$\sum_{n=0}^{q}F_{n}^{3}=\frac{1}{10}\left( F_{3q+2}-6\left( -1\right)
^{q}F_{q-1}+5\right) $ (see \cite{Be}). In fact, the case of sums of odd
powers of Fibonacci and Lucas numbers has been considered for several
authors (see \cite{C-He}, \cite{Oz}, \cite{Pr}). It turns out that some of
their nice results belong to some of the cases (i) to (vii) above, so they
can not be written as in (\ref{3.3}).
\end{remark}

Let us consider now the case of sums of powers of Lucas polynomials. From (%
\ref{2.82}) we see that%
\begin{equation}
\left( -1\right) ^{s+1}\sum_{n=0}^{q}L_{tsn}^{k}\left( x\right)
=\sum_{i=0}^{tk}\sum_{j=0}^{i}\left( -1\right) ^{\frac{\left(
sj+2(s+1)\right) \left( j+1\right) }{2}}\dbinom{tk+1}{j}_{F_{s}\left(
x\right) }L_{ts\left( i-j\right) }^{k}\left( x\right) \sum_{n=0}^{q}\dbinom{%
n+tk-i}{tk}_{F_{s}\left( x\right) },  \label{3.21}
\end{equation}%
which can be written as%
\begin{eqnarray}
\sum_{n=0}^{q}L_{tsn}^{k}\left( x\right) &=&\left( -1\right)
^{s+1}\sum_{m=1}^{tk}\sum_{i=0}^{tk-m}\sum_{j=0}^{i}\left( -1\right) ^{\frac{%
\left( sj+2(s+1)\right) \left( j+1\right) }{2}}\dbinom{tk+1}{j}_{F_{s}\left(
x\right) }L_{ts\left( i-j\right) }^{k}\left( x\right) \dbinom{q+m}{tk}%
_{F_{s}\left( x\right) }  \label{3.22} \\
&&+\left( -1\right) ^{s+1}\sum_{i=0}^{tk}\sum_{j=0}^{i}\left( -1\right) ^{%
\frac{\left( sj+2(s+1)\right) \left( j+1\right) }{2}}\dbinom{tk+1}{j}%
_{F_{s}\left( x\right) }L_{ts\left( i-j\right) }^{k}\left( x\right)
\sum_{n=0}^{q}\dbinom{n}{tk}_{F_{s}\left( x\right) }.  \notag
\end{eqnarray}

Expression (\ref{3.22}) tells us that the sum $\sum_{n=0}^{q}L_{tsn}^{k}%
\left( x\right) $ can be written as a linear combination of the $s$%
-Fibopolynomials $\binom{q+m}{tk}_{F_{s}\left( x\right) }$, $m=1,2,\ldots
,tk $, according to 
\begin{equation}
\sum_{n=0}^{q}L_{tsn}^{k}\left( x\right) =\left( -1\right)
^{s+1}\sum_{m=1}^{tk}\sum_{i=0}^{tk-m}\sum_{j=0}^{i}\left( -1\right) ^{\frac{%
\left( sj+2(s+1)\right) \left( j+1\right) }{2}}\dbinom{tk+1}{j}_{F_{s}\left(
x\right) }L_{ts\left( i-j\right) }^{k}\left( x\right) \dbinom{q+m}{tk}%
_{F_{s}\left( x\right) },  \label{3.23}
\end{equation}%
if and only if 
\begin{equation}
\sum_{i=0}^{tk}\sum_{j=0}^{i}\left( -1\right) ^{\frac{\left(
sj+2(s+1)\right) \left( j+1\right) }{2}}\dbinom{tk+1}{j}_{F_{s}\left(
x\right) }L_{ts\left( i-j\right) }^{k}\left( x\right) =0.  \label{3.24}
\end{equation}

From (\ref{2.7}) and (\ref{2.72}) we can write%
\begin{eqnarray}
&&\sum\limits_{i=0}^{tk}\!\sum\limits_{j=0}^{i}\left( -1\right) ^{\frac{%
\left( sj+2(s+1)\right) \left( j+1\right) }{2}}\!\dbinom{tk+1}{j}%
_{F_{s}\left( x\right) }\!L_{ts\left( i-j\right) }^{k}\!\left( x\right)
z^{tk-i}  \label{3.25} \\
&=&\left( \sum\limits_{i=0}^{tk+1}\left( -1\right) ^{\frac{\left(
si+2(s+1)\right) \left( i+1\right) }{2}}\dbinom{tk+1}{i}_{F_{s}\left(
x\right) }z^{tk+1-i}\right) \sum_{l=0}^{k}\binom{k}{l}\frac{1}{z-\alpha
^{lts}\left( x\right) \beta ^{\left( k-l\right) ts}\left( x\right) }.  \notag
\end{eqnarray}

We have again the factor $\Pi _{2}\left( x,z\right) $ considered in the
Fibonacci case (see (\ref{3.7})), and the factor%
\begin{equation}
\widetilde{\Pi }_{1}\left( x,z\right) =\sum_{l=0}^{k}\binom{k}{l}\frac{1}{%
z-\alpha ^{lts}\left( x\right) \beta ^{\left( k-l\right) ts}\left( x\right) }%
.  \label{3.26}
\end{equation}

Plainly any of the conditions: (a) $\widetilde{\Pi }_{1}\left( x,1\right) =0$%
, or, (b) $\widetilde{\Pi }_{1}\left( x,1\right) <\infty $ and $\Pi
_{2}\left( x,1\right) =0$, imply (\ref{3.24}).

\begin{proposition}
\label{Prop3.2}The sum $\sum_{n=0}^{q}L_{tsn}^{k}\left( x\right) $ can be
written as a linear combination of the $s$-Fibopolynomials $\binom{q+m}{tk}%
_{F_{s}\left( x\right) }$, $m=1,2,\ldots ,tk$, according to (\ref{3.23}), in
the following cases%
\begin{equation*}
\begin{tabular}{|c||c|c|c|}
\hline
& $t$ & $k$ & $s$ \\ \hline\hline
(a) & even & odd & even \\ \hline
(b) & $\equiv 0\func{mod}4$ & odd & any \\ \hline
\end{tabular}%
\end{equation*}
\end{proposition}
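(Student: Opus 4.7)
The plan is to mirror the strategy used for Proposition \ref{Prop3.1}, reducing everything to the vanishing condition (\ref{3.24}) via the generating-function identity (\ref{3.25}), whose right-hand side factors as $\Pi_{2}(x,z)\widetilde{\Pi}_{1}(x,z)$. In each case I will exhibit $(z-1)$ as a factor of $\Pi_{2}(x,z)$ and then verify that $\widetilde{\Pi}_{1}(x,1)$ is finite, so that the product vanishes at $z=1$. In both cases (a) and (b), $k$ is odd and $t$ is even, hence $tk$ is even, so the factorization (\ref{2.11}) applies:
$$
\Pi_{2}(x,z)=(-1)^{s+1}\bigl(z-(-1)^{stk/2}\bigr)\prod_{j=0}^{tk/2-1}\bigl(z^{2}-(-1)^{sj}L_{2s(tk/2-j)}(x)\,z+1\bigr).
$$
The exponent $stk/2$ is even in case (a) because $s$ is even, and in case (b) because $4\mid t$ forces $tk/2=(t/2)k$ to be even. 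Hence $(-1)^{stk/2}=1$, so $(z-1)$ divides $\Pi_{2}(x,z)$ and $\Pi_{2}(x,1)=0$.

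Next I handle the remaining factor by pairing the summands of
$$
\widetilde{\Pi}_{1}(x,1)=\sum_{l=0}^{k}\binom{k}{l}\frac{1}{1-\alpha^{lts}(x)\beta^{(k-l)ts}(x)}
$$
indexed by $l$ and $k-l$. Writing $w=\alpha^{lts}(x)\beta^{(k-l)ts}(x)$, the companion summand involves $\alpha^{(k-l)ts}(x)\beta^{lts}(x)=(\alpha\beta)^{kts}/w$; since $t$ is even in both cases, $kts$ is even and $(\alpha\beta)^{kts}=1$, so the companion denominator is $1-1/w$. The elementary identity
$$
\frac{1}{1-w}+\frac{1}{1-1/w}=1
$$
shows that each pair contributes exactly $\binom{k}{l}$ to the sum. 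Since $k$ is odd there is no fixed point at $l=k/2$, and the $(k+1)/2$ pairs yield $\widetilde{\Pi}_{1}(x,1)=\sum_{l=0}^{(k-1)/2}\binom{k}{l}=2^{k-1}$, a finite constant. Combining this with $\Pi_{2}(x,1)=0$ establishes (\ref{3.24}) and therefore (\ref{3.23}).

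The main subtlety, and the step I expect to require the most care, is justifying that each individual summand of $\widetilde{\Pi}_{1}(x,1)$ is a well-defined rational function of $x$, i.e., that the denominator $1-\alpha^{lts}(x)\beta^{(k-l)ts}(x)$ is not identically zero. Using $\alpha\beta=-1$ together with the Binet expression $\beta^{m}(x)=\frac{1}{2}(L_{m}(x)-F_{m}(x)\sqrt{x^{2}+4})$, the relation $\alpha^{a}\beta^{b}\equiv 1$ in $x$ forces $a=b$ with $a$ even; but $a=lts$ equals $b=(k-l)ts$ only when $l=k/2$, which is impossible for $k$ odd. This is precisely where the parity hypothesis on $k$ in both cases (a) and (b) enters essentially, so the pairing argument is legitimate and the proof goes through.
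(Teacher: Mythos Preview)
Your proof is correct and follows the same route as the paper: in both cases you use the factorization (\ref{2.11}) to exhibit $(z-1)$ as a factor of $\Pi_2(x,z)$, and then verify that $\widetilde{\Pi}_1(x,1)$ is finite so that (\ref{3.24}) holds. The paper simply asserts (after the substitution $k\mapsto 2k-1$) that $\widetilde{\Pi}_1(x,1)=4^{k-1}$, whereas your pairing argument via $\frac{1}{1-w}+\frac{1}{1-1/w}=1$ makes this computation transparent and yields the equivalent value $2^{k-1}$ in the original odd $k$; your explicit check that $z=1$ is not a pole of $\widetilde{\Pi}_1$ (because $l=k/2$ is impossible) is exactly the content behind the paper's unstated verification.
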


\begin{proof}
In both cases we have $tk$ even, so it is valid the factorization (\ref{3.9}%
) for $\Pi _{2}\left( x,z\right) $.

(a) Suppose that $t$ and $s$ are even and that $k$ is odd. In this case the
factor $\left( z-\left( -1\right) ^{\frac{kts}{2}}\right) $ in $\Pi
_{2}\left( x,z\right) $ is $\left( z-1\right) $, so we have $\Pi _{2}\left(
x,1\right) =0$. It remains to check that $\widetilde{\Pi }_{1}\left(
x,1\right) $ is finite. In fact, by writing $k$ as $2k-1$ and using that $t$
and $s$ are even, one can see that $\widetilde{\Pi }_{1}\left( x,1\right)
=4^{k-1}$.

(b) Suppose now that $t\equiv 0\func{mod}4$, $k$ is odd and $s$ is any
positive integer. In this case the factor $\left( z-\left( -1\right) ^{\frac{%
kts}{2}}\right) $ in $\Pi _{2}\left( x,z\right) $ is again $\left(
z-1\right) $, so we have $\Pi _{2}\left( x,1\right) =0$. With a similar
calculation to the case (a), we can see that in this case we have also $%
\widetilde{\Pi }_{1}\left( x,1\right) =4^{k-1}$.
\end{proof}

An example from the case (b) of proposition \ref{Prop3.2} is the following
identity (corresponding to $t=4$ and $k=1$), valid for any $s\in \mathbb{N}$%
\begin{eqnarray}
\sum_{n=0}^{q}L_{4sn}\left( x\right) &=&2\dbinom{q+4}{4}_{F_{s}\left(
x\right) }+\left( -L_{4s}\left( x\right) +2\left( -1\right)
^{s+1}L_{2s}\left( x\right) \right) \dbinom{q+3}{4}_{F_{s}\left( x\right) }
\label{3.31} \\
&&+\left( -1\right) ^{s}\left( L_{6s}\left( x\right) +L_{2s}\left( x\right)
+2\left( -1\right) ^{s}\right) \dbinom{q+2}{4}_{F_{s}\left( x\right)
}-L_{4s}\left( x\right) \dbinom{q+1}{4}_{F_{s}\left( x\right) }.  \notag
\end{eqnarray}

Examples from the cases (a) and (b) of proposition \ref{Prop3.1}, and from
the case (a) of proposition \ref{Prop3.2}, will be given in section \ref%
{Sec4}, since some variants of them work also as examples of alternating
sums of powers of Fibonacci or Lucas polynomials, to be discussed in section %
\ref{Sec4} (see corollaries \ref{Cor4.1} and \ref{Cor4.2}).

\section{\label{Sec4}The main results (II): alternating sums}

According to (\ref{2.3}), (\ref{2.5}), (\ref{2.7}), (\ref{2.71}) and (\ref%
{2.72}), the $Z$ transform of the alternating sequences $\left\{ \left(
-1\right) ^{n}F_{tsn}^{k}\left( x\right) \right\} _{n=0}^{\infty }$ and $%
\left\{ \left( -1\right) ^{n}L_{tsn}^{k}\left( x\right) \right\}
_{n=0}^{\infty }$ are%
\begin{eqnarray}
\mathcal{Z}\left( \left( -1\right) ^{n}F_{tsn}^{k}\left( x\right) \right) &=&%
\frac{1}{\left( x^{2}+4\right) ^{\frac{k}{2}}}\sum_{l=0}^{k}\binom{k}{l}%
\left( -1\right) ^{k-l}\frac{z}{z+\alpha ^{lts}\left( x\right) \beta
^{\left( k-l\right) ts}\left( x\right) }  \label{4.1} \\
&=&-z\frac{\sum_{i=0}^{tk}\sum_{j=0}^{i}\left( -1\right) ^{\frac{\left(
sj+2(s+1)\right) \left( j+1\right) }{2}}\dbinom{tk+1}{j}_{F_{s}\left(
x\right) }F_{ts\left( i-j\right) }^{k}\left( x\right) \left( -z\right)
^{tk-i}}{\sum_{i=0}^{tk+1}\left( -1\right) ^{\frac{\left( si+2(s+1)\right)
\left( i+1\right) }{2}}\dbinom{tk+1}{i}_{F_{s}\left( x\right) }\left(
-z\right) ^{tk+1-i}},  \notag
\end{eqnarray}%
and%
\begin{eqnarray}
\mathcal{Z}\left( \left( -1\right) ^{n}L_{tsn}^{k}\left( x\right) \right)
&=&\sum_{l=0}^{k}\binom{k}{l}\frac{z}{z+\alpha ^{lts}\left( x\right) \beta
^{\left( k-l\right) ts}\left( x\right) }  \label{4.2} \\
&=&-z\frac{\sum_{i=0}^{tk}\sum_{j=0}^{i}\left( -1\right) ^{\frac{\left(
sj+2(s+1)\right) \left( j+1\right) }{2}}\dbinom{tk+1}{j}_{F_{s}\left(
x\right) }L_{ts\left( i-j\right) }^{k}\left( x\right) \left( -z\right)
^{tk-i}}{\sum_{i=0}^{tk+1}\left( -1\right) ^{\frac{\left( si+2(s+1)\right)
\left( i+1\right) }{2}}\dbinom{tk+1}{i}_{F_{s}\left( x\right) }\left(
-z\right) ^{tk+1-i}}.  \notag
\end{eqnarray}

By using (\ref{2.81}) and (\ref{2.82}) it is possible to establish
expressions, for the case of alternating sums, similar to expressions (\ref%
{3.2}) and (\ref{3.22}) , namely%
\begin{eqnarray}
&&\sum_{n=0}^{q}\left( -1\right) ^{n}F_{tsn}^{k}\left( x\right)  \label{4.3}
\\
&=&\left( -1\right)
^{s+1}\sum_{m=1}^{tk}\sum_{i=0}^{tk-m}\sum_{j=0}^{i}\left( -1\right) ^{\frac{%
\left( sj+2(s+1)\right) \left( j+1\right) }{2}+i+tk+q+m}\dbinom{tk+1}{j}%
_{F_{s}\left( x\right) }F_{ts\left( i-j\right) }^{k}\left( x\right) \dbinom{%
q+m}{tk}_{F_{s}\left( x\right) }  \notag \\
&&+\left( -1\right) ^{s+1}\sum_{i=0}^{tk}\sum_{j=0}^{i}\left( -1\right) ^{%
\frac{\left( sj+2(s+1)\right) \left( j+1\right) }{2}+i+tk}\dbinom{tk+1}{j}%
_{F_{s}\left( x\right) }F_{ts\left( i-j\right) }^{k}\left( x\right)
\sum_{n=0}^{q}\left( -1\right) ^{n}\dbinom{n}{tk}_{F_{s}\left( x\right) }, 
\notag
\end{eqnarray}%
and%
\begin{eqnarray}
&&\sum_{n=0}^{q}\left( -1\right) ^{n}L_{tsn}^{k}\left( x\right)  \label{4.4}
\\
&=&\left( -1\right)
^{s+1}\sum_{m=1}^{tk}\sum_{i=0}^{tk-m}\sum_{j=0}^{i}\left( -1\right) ^{\frac{%
\left( sj+2(s+1)\right) \left( j+1\right) }{2}+i+tk+q+m}\dbinom{tk+1}{j}%
_{F_{s}\left( x\right) }L_{ts\left( i-j\right) }^{k}\left( x\right) \dbinom{%
q+m}{tk}_{F_{s}\left( x\right) }  \notag \\
&&+\left( -1\right) ^{s+1}\sum_{i=0}^{tk}\sum_{j=0}^{i}\left( -1\right) ^{%
\frac{\left( sj+2(s+1)\right) \left( j+1\right) }{2}+i+tk}\dbinom{tk+1}{j}%
_{F_{s}\left( x\right) }L_{ts\left( i-j\right) }^{k}\left( x\right)
\sum_{n=0}^{q}\left( -1\right) ^{n}\dbinom{n}{tk}_{F_{s}\left( x\right) }, 
\notag
\end{eqnarray}%
respectively. From (\ref{4.3}) and (\ref{4.4}) we see that the alternating
sums of powers $\sum_{n=0}^{q}\left( -1\right) ^{n}F_{tsn}^{k}\left(
x\right) $ and $\sum_{n=0}^{q}\left( -1\right) ^{n}L_{tsn}^{k}\left(
x\right) $ can be written as linear combinations of the $s$-Fibopolynomials $%
\binom{q+m}{tk}_{F_{s}\left( x\right) }$, $m=1,2,\ldots ,tk$, according to%
\begin{eqnarray}
&&\sum_{n=0}^{q}\left( -1\right) ^{n}F_{tsn}^{k}\left( x\right)  \label{4.5}
\\
&=&\left( -1\right)
^{s+1}\sum_{m=1}^{tk}\sum_{i=0}^{tk-m}\sum_{j=0}^{i}\left( -1\right) ^{\frac{%
\left( sj+2(s+1)\right) \left( j+1\right) }{2}+i+tk+q+m}\dbinom{tk+1}{j}%
_{F_{s}\left( x\right) }F_{ts\left( i-j\right) }^{k}\left( x\right) \dbinom{%
q+m}{tk}_{F_{s}\left( x\right) },  \notag
\end{eqnarray}%
and%
\begin{eqnarray}
&&\sum_{n=0}^{q}\left( -1\right) ^{n}L_{tsn}^{k}\left( x\right)  \label{4.6}
\\
&=&\left( -1\right)
^{s+1}\sum_{m=1}^{tk}\sum_{i=0}^{tk-m}\sum_{j=0}^{i}\left( -1\right) ^{\frac{%
\left( sj+2(s+1)\right) \left( j+1\right) }{2}+i+tk+q+m}\dbinom{tk+1}{j}%
_{F_{s}\left( x\right) }L_{ts\left( i-j\right) }^{k}\left( x\right) \dbinom{%
q+m}{tk}_{F_{s}\left( x\right) },  \notag
\end{eqnarray}%
if and only if we have that 
\begin{equation}
\sum_{i=0}^{tk}\sum_{j=0}^{i}\left( -1\right) ^{\frac{\left(
sj+2(s+1)\right) \left( j+1\right) }{2}+i}\dbinom{tk+1}{j}_{F_{s}\left(
x\right) }F_{ts\left( i-j\right) }^{k}\left( x\right) =0,  \label{4.7}
\end{equation}%
and%
\begin{equation}
\sum_{i=0}^{tk}\sum_{j=0}^{i}\left( -1\right) ^{\frac{\left(
sj+2(s+1)\right) \left( j+1\right) }{2}+i}\dbinom{tk+1}{j}_{F_{s}\left(
x\right) }L_{ts\left( i-j\right) }^{k}\left( x\right) =0,  \label{4.8}
\end{equation}%
respectively. Observe that, according to (\ref{4.1}) and (\ref{4.2}), we
have 
\begin{eqnarray}
&&\left( x^{2}+4\right) ^{\frac{k}{2}}\sum_{i=0}^{tk}\sum_{j=0}^{i}\left(
-1\right) ^{\frac{\left( sj+2(s+1)\right) \left( j+1\right) }{2}+i}\dbinom{%
tk+1}{j}_{F_{s}\left( x\right) }F_{ts\left( i-j\right) }^{k}\left( x\right)
z^{tk-i}  \label{4.9} \\
&=&\left( \sum_{l=0}^{k}\binom{k}{l}\left( -1\right) ^{k-l}\frac{1}{z+\alpha
^{lts}\left( x\right) \beta ^{\left( k-l\right) ts}\left( x\right) }\right)
\left( \sum_{i=0}^{tk+1}\left( -1\right) ^{\frac{\left( si+2(s+1)\right)
\left( i+1\right) }{2}+i}\dbinom{tk+1}{i}_{F_{s}\left( x\right)
}z^{tk+1-i}\right) ,  \notag
\end{eqnarray}%
and%
\begin{eqnarray}
&&\sum\limits_{i=0}^{tk}\!\sum\limits_{j=0}^{i}\left( -1\right) ^{\frac{%
\left( sj+2(s+1)\right) \left( j+1\right) }{2}+i}\!\dbinom{tk+1}{j}%
_{F_{s}\left( x\right) }\!L_{ts\left( i-j\right) }^{k}\!\left( x\right)
z^{tk-i}  \label{4.10} \\
&=&\left( \sum_{l=0}^{k}\binom{k}{l}\frac{1}{z+\alpha ^{lts}\left( x\right)
\beta ^{\left( k-l\right) ts}\left( x\right) }\right) \left(
\sum\limits_{i=0}^{tk+1}\left( -1\right) ^{\frac{\left( si+2(s+1)\right)
\left( i+1\right) }{2}+i}\dbinom{tk+1}{i}_{F_{s}\left( x\right)
}z^{tk+1-i}\right) ,  \notag
\end{eqnarray}%
respectively. Then we need to consider now the following factors%
\begin{equation}
\Omega _{1}\left( x,z\right) =\sum_{l=0}^{k}\binom{k}{l}\left( -1\right)
^{k-l}\frac{1}{z+\alpha ^{lts}\left( x\right) \beta ^{\left( k-l\right)
ts}\left( x\right) },  \label{4.11}
\end{equation}

\begin{equation}
\widetilde{\Omega }_{1}\left( x,z\right) =\sum_{l=0}^{k}\binom{k}{l}\frac{1}{%
z+\alpha ^{lts}\left( x\right) \beta ^{\left( k-l\right) ts}\left( x\right) }%
,  \label{4.12}
\end{equation}

and 
\begin{equation}
\Omega _{2}\left( x,z\right) =\sum_{i=0}^{tk+1}\left( -1\right) ^{\frac{%
\left( si+2(s+1)\right) \left( i+1\right) }{2}+i}\dbinom{tk+1}{i}%
_{F_{s}\left( x\right) }z^{tk+1-i}.  \label{4.13}
\end{equation}

Plainly (\ref{4.7}) is concluded from any of the conditions: (a) $\Omega
_{1}\left( x,1\right) =0$, or, (b) $\Omega _{1}\left( x,1\right) <\infty $
and $\Omega _{2}\left( x,1\right) =0$, and (\ref{4.8}) is concluded from any
of the conditions: (a) $\widetilde{\Omega }_{1}\left( x,1\right) =0$, or,
(b) $\widetilde{\Omega }_{1}\left( x,1\right) <\infty $ and $\Omega
_{2}\left( x,1\right) =0$. In this section we give conditions on the
parameters $t$, $k$ and $s$, that imply (\ref{4.7}) (for the Fibonacci case:
proposition \ref{Prop4.1}), and that imply (\ref{4.8}) (for the Lucas case:
proposition \ref{Prop4.2}).

In the Fibonacci case we have the following result.

\begin{proposition}
\label{Prop4.1}The alternating sum $\sum_{n=0}^{q}\left( -1\right)
^{n}F_{tsn}^{k}\left( x\right) $ can be written as a linear combination of
the $s$-Fibopolynomials $\binom{q+m}{tk}_{F_{s}\left( x\right) }$, $%
m=1,2,\ldots ,tk$, according to (\ref{4.5}), in the following cases%
\begin{equation*}
\begin{tabular}{|c||c|c|c|}
\hline
& $t$ & $k$ & $s$ \\ \hline\hline
(a) & any & $\equiv 0\func{mod}4$ & any \\ \hline
(b) & any & even & even \\ \hline
(c) & $\equiv 2\func{mod}4$ & any & odd \\ \hline
(d) & even & even & any \\ \hline
\end{tabular}%
\end{equation*}
\end{proposition}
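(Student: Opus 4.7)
The plan is to verify condition (\ref{4.7}) in each case via one of the two sufficient conditions listed after (\ref{4.13}): either (i) $\Omega_{1}(x,1)=0$, or (ii) $\Omega_{1}(x,1)$ is finite together with $\Omega_{2}(x,1)=0$. I will handle cases (a), (b), (d), and case (c) with $k$ even by condition (i); case (c) with $k$ odd will be the only one forced to use condition (ii).

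First I will record a factorization of $\Omega_{2}(x,z)$ analogous to (\ref{2.11})--(\ref{2.12}). Substituting $-z$ for $z$ in (\ref{2.10}) and matching against the definition (\ref{4.13}) yields
\[
\Omega_{2}(x,z)=(-1)^{s+1}\prod_{j=0}^{tk}\left(z+\alpha^{sj}(x)\beta^{s(tk-j)}(x)\right),
\]
and when $tk=2p$ is even, pairing $j$ with $tk-j$ and using $\alpha\beta=-1$ produces a linear factor $\left(z+(-1)^{sp}\right)$ together with quadratic factors $\left(z^{2}+(-1)^{sj}L_{2s(p-j)}(x)z+1\right)$. The linear factor vanishes at $z=1$ precisely when $sp$ is odd.

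For case (c) with $k$ odd, $t\equiv 2\func{mod}4$, and $s$ odd, one checks that $p=tk/2=(t/2)k$ is odd and hence $sp$ is odd, so $\Omega_{2}(x,1)=0$. To show that $\Omega_{1}(x,1)$ is finite, I pair the terms of (\ref{4.11}) indexed by $l$ and $k-l$: since $k$ is odd the coefficients $\binom{k}{l}(-1)^{k-l}$ and $\binom{k}{l}(-1)^{l}$ have opposite signs, and a short computation using $\alpha^{n}-\beta^{n}=\sqrt{x^{2}+4}\,F_{n}(x)$ reduces each pair sum to a rational expression of the form $\binom{k}{l}(-1)^{l+1}\sqrt{x^{2}+4}\,F_{(k-2l)ts}(x)/(2+L_{(k-2l)ts}(x))$, which is finite because $2+L_{n}(x)$ is a nonzero polynomial for $n\geq1$.

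For the remaining cases, where $k$ is even, the key observation is that in each of (a), (b), (d), and (c) with $k$ even, both $kts$ and $kts/2$ are even. This forces the numerator $2+(-1)^{lts}L_{(k-2l)ts}(x)$ and denominator $1+(-1)^{lts}L_{(k-2l)ts}(x)+(-1)^{kts}$ of the pair sum of (\ref{4.11}) to coincide, so each pair $(l,k-l)$ with $l<k/2$ contributes $\binom{k}{l}(-1)^{l}$, while the self-paired middle term $l=k/2$ contributes $\binom{k}{k/2}(-1)^{k/2}/2$. The binomial identity $\sum_{l=0}^{k}\binom{k}{l}(-1)^{l}=0$ combined with the symmetry $\binom{k}{l}=\binom{k}{k-l}$ then collapses the sum to $\Omega_{1}(x,1)=0$. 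The main obstacle is the parity bookkeeping: verifying $(-1)^{kts}=(-1)^{kts/2}=1$ in every subcase so that the cancellation in the pair sums and the treatment of the middle term are uniform; once this is granted, the algebraic identities do the rest.
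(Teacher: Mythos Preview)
Your proof is correct and follows essentially the same approach as the paper: both verify (\ref{4.7}) by showing either $\Omega_{1}(x,1)=0$ (cases (a), (b), (d), and (c) with $k$ even) or $\Omega_{1}(x,1)<\infty$ together with $\Omega_{2}(x,1)=0$ (case (c) with $k$ odd), using the same pairing $l\leftrightarrow k-l$ and the same binomial identity $\sum_{l}(-1)^{l}\binom{k}{l}=0$. The only difference is organizational: the paper treats (a), (b), (c1), (d) in four separate paragraphs with ad hoc substitutions ($k\to 4k$, $k\to 2k$, etc.), whereas you collapse them into a single argument by isolating the common parity hypothesis that $kts$ and $kts/2$ are both even, which is a cleaner presentation of the same computation.
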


\begin{proof}
Observe that in all the four cases we have $tk$ even. Thus, according to (%
\ref{2.11}) (with $z$ replaced by $-z$) we can factor $\Omega _{2}\left(
x,z\right) $ as%
\begin{equation}
\Omega _{2}\left( x,z\right) =\left( -1\right) ^{s}\left( z+\left( -1\right)
^{\frac{tsk}{2}}\right) \prod_{j=0}^{\frac{tk}{2}-1}\left( z^{2}+\left(
-1\right) ^{sj}L_{2s\left( \frac{tk}{2}-j\right) }\left( x\right) z+1\right)
.  \label{4.14}
\end{equation}

(a) Suppose that $k\equiv 0\func{mod}4$ and that $t$ and $s$ are any
positive integers. In this case the factor $\left( z+\left( -1\right) ^{%
\frac{tsk}{2}}\right) $ of (\ref{4.14}) is $z+1$, so we have $\Omega
_{2}\left( x,1\right) \neq 0$. However, by setting $z=1$ in (\ref{4.11}),
with $k$ replaced by $4k$, we get%
\begin{equation*}
\Omega _{1}\left( x,1\right) =\sum_{l=0}^{2k-1}\binom{4k}{l}\left( -1\right)
^{l}+\frac{1}{2}\binom{4k}{2k}=0.
\end{equation*}

Thus (\ref{4.7}) holds, as wanted.

(b) Suppose that $k$ and $s$ are even, and that $t$ is any positive integer.
In this case we have $z+\left( -1\right) ^{\frac{tsk}{2}}=z+1$, and then $%
\Omega _{2}\left( x,1\right) \neq 0$. By setting $z=1$ in (\ref{4.11}), with 
$k$ and $s$ substituted by $2k$ and $2s$, respectively, we get%
\begin{equation*}
\Omega _{1}\left( x,1\right) =\sum_{l=0}^{2k}\binom{2k}{l}\left( -1\right)
^{l}\frac{1}{1+\alpha ^{2lts}\left( x\right) \beta ^{\left( 2k-l\right)
2ts}\left( x\right) }=\sum_{l=0}^{k-1}\binom{2k}{l}\left( -1\right) ^{l}+%
\frac{1}{2}\binom{2k}{k}\left( -1\right) ^{k}=0.
\end{equation*}

Thus (\ref{4.7}) holds, as wanted.

(c) Suppose that $s$ is odd, $t\equiv 2\func{mod}4$, and $k$ is any positive
integer. If in (\ref{4.14}) we set $z=1$ and replace $t$ by $2\left(
2t-1\right) $, we obtain that%
\begin{equation}
\Omega _{2}\left( x,1\right) =\left( 1+\left( -1\right) ^{k}\right)
\prod_{j=0}^{\left( 2t-1\right) k-1}\left( \left( -1\right) ^{j}L_{2s\left(
\left( 2t-1\right) k-j\right) }\left( x\right) +2\right) .  \label{4.15}
\end{equation}

We consider two subcases:

(c1) Suppose that $k$ is even. In this case we have $\Omega _{2}\left(
x,1\right) \neq 0$. But if we set $z=1$ in (\ref{4.11}), replace $k$ by $2k$%
, and use that $t\equiv 2\func{mod}4$ and that $s$ is odd, we obtain that%
\begin{equation*}
\Omega _{1}\left( x,1\right) =\sum_{l=0}^{k-1}\binom{2k}{l}\left( -1\right)
^{l}+\frac{1}{2}\binom{2k}{k}\left( -1\right) ^{k}=0.
\end{equation*}

Thus (\ref{4.7}) holds when $k$ is even.

(c2) Suppose that $k$ is odd. In this case we have clearly that $\Omega
_{2}\left( x,1\right) =0$. We check that $\Omega _{1}\left( x,1\right) $ is
finite. If we set $z=1$ in (\ref{4.11}), substitute $k$ by $2k-1$, and use
that $t\equiv 2\func{mod}4$ and that $s$ is odd, we obtain that 
\begin{equation*}
\Omega _{1}\left( x,1\right) =\sqrt{x^{2}+4}\sum_{l=0}^{k-1}\binom{2k-1}{l}%
\left( -1\right) ^{l+1}\frac{F_{\left( 2k-1-2l\right) ts}\left( x\right) }{%
2+L_{\left( 2k-1-2l\right) ts}\left( x\right) }.
\end{equation*}

Then we have $\Omega _{1}\left( x,1\right) <\infty $, as wanted. That is,
expression (\ref{4.7}) holds when $k$ is odd.

(d) Suppose that $k$ and $t$ are even and $s$ is any positive integer. In
this case the factor $\left( z+\left( -1\right) ^{\frac{tsk}{2}}\right) $ of
(\ref{4.14}) is $\left( z+1\right) $, so we have $\Omega _{2}\left(
x,1\right) \neq 0$. Observe that, replacing \thinspace $k$ and $t$ by $2k$
and $2t$, respectively (and letting $s$ be any natural number) we obtain the
same expression for $\Omega _{1}\left( x,1\right) $ of the case (b), namely%
\begin{equation*}
\Omega _{1}\left( x,1\right) =\sum_{l=0}^{2k}\binom{2k}{l}\left( -1\right)
^{l}\frac{1}{1+\alpha ^{2lts}\left( x\right) \beta ^{\left( 2k-l\right)
2ts}\left( x\right) },
\end{equation*}%
which is equal to $0$. That is, in this case we have also that $\Omega
_{1}\left( x,1\right) =0$, and we conclude that (\ref{4.7}) holds.
\end{proof}

\begin{corollary}
\label{Cor4.1}

(a) If $t$ is odd and $k\equiv 2\func{mod}4$, we have the following identity
valid for any $s\in \mathbb{N}$ 
\begin{eqnarray}
&&\sum_{n=0}^{q}\left( -1\right) ^{\left( s+1\right) n}F_{tsn}^{k}\left(
x\right)  \label{4.19} \\
&=&\left( -1\right)
^{s+1}\sum_{m=1}^{tk}\sum_{i=0}^{tk-m}\sum_{j=0}^{i}\left( -1\right) ^{\frac{%
\left( sj+2(s+1)\right) \left( j+1\right) }{2}+\left( s+1\right) \left(
i+tk+q+m\right) }\dbinom{tk+1}{j}_{F_{s}\left( x\right) }F_{ts\left(
i-j\right) }^{k}\left( x\right) \dbinom{q+m}{tk}_{F_{s}\left( x\right) }. 
\notag
\end{eqnarray}

(b) If $t\equiv 2\func{mod}4$ and $k$ is odd, we have the following identity
valid for any $s\in \mathbb{N}$ 
\begin{eqnarray}
&&\sum_{n=0}^{q}\left( -1\right) ^{sn}F_{tsn}^{k}\left( x\right)
\label{4.20} \\
&=&\left( -1\right)
^{s+1}\sum_{m=1}^{tk}\sum_{i=0}^{tk-m}\sum_{j=0}^{i}\left( -1\right) ^{\frac{%
\left( sj+2(s+1)\right) \left( j+1\right) }{2}+s\left( i+tk+q+m\right) }%
\dbinom{tk+1}{j}_{F_{s}\left( x\right) }F_{ts\left( i-j\right) }^{k}\left(
x\right) \dbinom{q+m}{tk}_{F_{s}\left( x\right) }.  \notag
\end{eqnarray}
\end{corollary}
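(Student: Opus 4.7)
The plan is to derive both parts of the corollary by splitting on the parity of $s$, observing that the sign $(-1)^{(s+1)n}$ in (4.19) collapses to $1$ when $s$ is odd and to $(-1)^n$ when $s$ is even, while the sign $(-1)^{sn}$ in (4.20) collapses in exactly the opposite way. In each parity case, the relevant sum becomes either a sum of powers covered by Proposition \ref{Prop3.1} or an alternating sum of powers covered by Proposition \ref{Prop4.1}, and the factor $(-1)^{(s+1)(i+tk+q+m)}$ (respectively $(-1)^{s(i+tk+q+m)}$) in front of the $s$-Fibopolynomial in (4.19) or (4.20) reduces accordingly to $1$ or to $(-1)^{i+tk+q+m}$, matching either (3.3) or (4.5).

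For part (a), assume $t$ is odd and $k\equiv 2\bmod 4$. When $s$ is odd, the hypothesis becomes exactly case (b) of Proposition \ref{Prop3.1}, so $\sum_{n=0}^{q}F_{tsn}^{k}(x)$ can be written according to (\ref{3.3}). Since $s+1$ is then even, $(-1)^{(s+1)n}=1$ and every factor $(-1)^{(s+1)(i+tk+q+m)}$ equals $1$, so (\ref{4.19}) reduces to (\ref{3.3}). When $s$ is even, the hypothesis falls under case (b) of Proposition \ref{Prop4.1} (any $t$, $k$ even, $s$ even), so $\sum_{n=0}^{q}(-1)^{n}F_{tsn}^{k}(x)$ can be written according to (\ref{4.5}); since $s+1$ is odd, $(-1)^{(s+1)n}=(-1)^{n}$ and $(-1)^{(s+1)(i+tk+q+m)}=(-1)^{i+tk+q+m}$, so (\ref{4.19}) reduces to (\ref{4.5}). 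In both sub-cases (\ref{4.19}) holds.

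For part (b), assume $t\equiv 2\bmod 4$ and $k$ odd. When $s$ is even, the hypothesis is case (a) of Proposition \ref{Prop3.1} ($t$ even, $k$ odd, $s$ even), so $\sum_{n=0}^{q}F_{tsn}^{k}(x)$ equals (\ref{3.3}); since $(-1)^{sn}=1$ and $(-1)^{s(i+tk+q+m)}=1$, (\ref{4.20}) reduces to (\ref{3.3}). When $s$ is odd, the hypothesis is case (c) of Proposition \ref{Prop4.1} ($t\equiv 2\bmod 4$, any $k$, $s$ odd), so $\sum_{n=0}^{q}(-1)^{n}F_{tsn}^{k}(x)$ equals (\ref{4.5}); since $(-1)^{sn}=(-1)^{n}$ and $(-1)^{s(i+tk+q+m)}=(-1)^{i+tk+q+m}$, (\ref{4.20}) reduces to (\ref{4.5}).

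The work is essentially bookkeeping: the only thing to verify is that the parity-dependent exponent of $-1$ in the corollary collapses on the nose to either the exponent appearing in (\ref{3.3}) or the one appearing in (\ref{4.5}), according to the parity of $s$. The potential pitfall is matching indices cleanly so that the overall outer factor $(-1)^{s+1}$ and the inner exponent $\tfrac{(sj+2(s+1))(j+1)}{2}$ (which already depends on $s$) combine consistently with the new sign in each parity case; checking that the two parity branches exhaust the hypotheses of Propositions \ref{Prop3.1} and \ref{Prop4.1} under the stated joint conditions on $t$ and $k$ is the one conceptual point requiring care.
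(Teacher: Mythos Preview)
Your proof is correct and follows exactly the same approach as the paper: split on the parity of $s$, and in each sub-case invoke the matching case of Proposition~\ref{Prop3.1} (case (b) for part (a) with $s$ odd, case (a) for part (b) with $s$ even) or Proposition~\ref{Prop4.1} (case (b) for part (a) with $s$ even, case (c) for part (b) with $s$ odd), noting that the parity-dependent sign factors collapse to the corresponding formulas (\ref{3.3}) or (\ref{4.5}). Your write-up is in fact slightly more explicit than the paper's about how the sign $(-1)^{(s+1)(i+tk+q+m)}$ or $(-1)^{s(i+tk+q+m)}$ reduces in each case.
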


\begin{proof}
(a) When $t$ is odd and $k\equiv 2\func{mod}4$, formula (\ref{4.19}) with $s$
odd gives the result (\ref{3.3}) of case (b) of proposition \ref{Prop3.1}
(which is valid for $t$ odd, $k\equiv 2\func{mod}4$ and $s$ odd). Similarly,
for $t$ odd and $k\equiv 2\func{mod}4$, formula (\ref{4.19}) with $s$ even,
gives the result (\ref{4.5}) of case (b) of proposition \ref{Prop4.1} (which
is valid for $k$ and $s$ even and any $t$).

(b) When $t\equiv 2\func{mod}4$ and $k$ is odd, formula (\ref{4.20}) with $s$
even, gives the result (\ref{3.3}) of case (a) of proposition \ref{Prop3.1}
(which is valid for $t$ even, $k$ odd and $s$ even). Similarly, for $t\equiv
2\func{mod}4$ and $k$ odd, formula (\ref{4.20}) with $s$ odd, gives the
result (\ref{4.5}) of case (c) of proposition \ref{Prop4.1} (which is valid
for $t\equiv 2\func{mod}4$, $s$ odd and any $k$).
\end{proof}

We give examples from the cases considered in corollary \ref{Cor4.1}.
Beginning with the case (a), by setting $t=1$ and $k=2$ in (\ref{4.19}), we
have the following identity, valid for $s\in \mathbb{N}$

\begin{equation}
\sum_{n=0}^{q}\left( -1\right) ^{\left( s+1\right) \left( n+q\right)
}F_{sn}^{2}\left( x\right) =F_{s}^{2}\left( x\right) \dbinom{q+1}{2}%
_{F_{s}\left( x\right) }.  \label{4.21}
\end{equation}

The case $t=s=x=1$ and $k=6$ of (\ref{4.19}) is 
\begin{equation*}
\sum_{n=0}^{q}F_{n}^{6}=\dbinom{q+1}{6}_{F}+\dbinom{q+5}{6}_{F}-11\left( 
\dbinom{q+2}{6}_{F}+\dbinom{q+4}{6}_{F}\right) -64\dbinom{q+3}{6}_{F}.
\end{equation*}

This identity is mentioned in \cite{Pi1} (p. 259), and previously was
obtained in \cite{O-N} with the much more simple right-hand side $\frac{1}{4}%
\left( F_{q}^{5}F_{q+3}+F_{2q}\right) $.

An example from the case (b) of corollary \ref{Cor4.1} is the following
identity, valid for $s\in \mathbb{N}$ (obtained by setting $t=2$ and $k=1$
in (\ref{4.20}))%
\begin{equation}
\sum_{n=0}^{q}\left( -1\right) ^{s\left( n+q\right) }F_{2sn}\left( x\right)
=F_{2s}\left( x\right) \dbinom{q+1}{2}_{F_{s}\left( x\right) }.  \label{4.24}
\end{equation}

From (\ref{4.21}) and (\ref{4.24}) we see that%
\begin{equation}
\left( -1\right) ^{\left( s+1\right) q}L_{s}\left( x\right)
\sum_{n=0}^{q}\left( -1\right) ^{\left( s+1\right) n}F_{sn}^{2}\left(
x\right) =\left( -1\right) ^{sq}F_{s}\left( x\right) \sum_{n=0}^{q}\left(
-1\right) ^{sn}F_{2sn}\left( x\right) =F_{s\left( q+1\right) }\left(
x\right) F_{sq}\left( x\right) .  \label{4.27}
\end{equation}

The simplest example from the case (a) of proposition \ref{Prop4.1},
corresponding to $k=4$ and $t=1$, is the following identity valid for any $%
s\in \mathbb{N}$%
\begin{equation}
\sum_{n=0}^{q}\left( -1\right) ^{n+q}F_{sn}^{4}\left( x\right)
=F_{s}^{4}\left( x\right) \left( \dbinom{q+1}{4}_{F_{s}\left( x\right) }+%
\dbinom{q+3}{4}_{F_{s}\left( x\right) }+\left( 3\left( -1\right)
^{s}L_{2s}\left( x\right) +4\right) \dbinom{q+2}{4}_{F_{s}\left( x\right)
}\right) .  \label{4.36}
\end{equation}

With some patience one can see that the case $x=1$ of (\ref{4.36}) is 
\begin{equation*}
\sum_{n=0}^{q}\left( -1\right) ^{n}F_{sn}^{4}=\frac{\left( -1\right)
^{q}F_{sq}F_{s\left( q+1\right) }\left( L_{s}L_{sq}L_{s\left( q+1\right)
}-4L_{2s}\right) }{5L_{s}L_{2s}},
\end{equation*}%
demonstrated by Melham \cite{Mel2}.

An example from the case (d) of proposition \ref{Prop4.1}, corresponding to $%
t=k=2$, is the following identity valid for any $s\in \mathbb{N}$%
\begin{equation}
\sum_{n=0}^{q}\left( -1\right) ^{n+q}F_{2sn}^{2}\left( x\right)
=F_{2s}^{2}\left( x\right) \left( \dbinom{q+1}{4}_{F_{s}\left( x\right)
}+\left( -1\right) ^{s+1}L_{2s}\left( x\right) \dbinom{q+2}{4}_{F_{s}\left(
x\right) }+\dbinom{q+3}{4}_{F_{s}\left( x\right) }\right) .  \label{4.37}
\end{equation}

Now we consider alternating sums of powers of Lucas polynomials.

\begin{proposition}
\label{Prop4.2}The alternating sum $\sum_{n=0}^{q}\left( -1\right)
^{n}L_{tsn}^{k}\left( x\right) $ can be written as a linear combination of
the $s$-Fibopolynomials $\binom{q+m}{tk}_{F_{s}\left( x\right) }$, $%
m=1,2,\ldots ,tk$, according to (\ref{4.6}), if $s$ and $k$ are odd positive
integers and $t\equiv 2\func{mod}4.$
\end{proposition}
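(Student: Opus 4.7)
The strategy mirrors the proof of proposition \ref{Prop4.1}: we verify that the required identity (\ref{4.8}) holds by using the factorization on the right-hand side of (\ref{4.10}). That is, setting $z=1$ in (\ref{4.10}) shows (\ref{4.8}) reduces to
\begin{equation*}
\widetilde{\Omega }_{1}\left( x,1\right) \,\Omega _{2}\left( x,1\right) =0.
\end{equation*}
Under the hypothesis that $s$ and $k$ are odd and $t\equiv 2\func{mod}4$, the product $tk$ is even, so the factorization (\ref{4.14}) for $\Omega _{2}\left( x,z\right) $ is available. The plan is therefore to show that $\Omega _{2}\left( x,1\right) =0$ while $\widetilde{\Omega }_{1}\left( x,1\right) $ remains finite.

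For the first step, write $t=2\left( 2m+1\right) $ so that $t/2$ is odd. Then $tsk/2=\left( t/2\right) sk$ is a product of three odd numbers, hence odd, so $\left( -1\right) ^{tsk/2}=-1$ and the factor $\left( z+\left( -1\right) ^{tsk/2}\right) =z-1$ in (\ref{4.14}) vanishes at $z=1$. This gives $\Omega _{2}\left( x,1\right) =0$. For the second step, my plan is to pair the terms $l$ and $k-l$ in
\begin{equation*}
\widetilde{\Omega }_{1}\left( x,1\right) =\sum_{l=0}^{k}\binom{k}{l}\frac{1}{1+\alpha ^{lts}\left( x\right) \beta ^{\left( k-l\right) ts}\left( x\right) }\text{.}
\end{equation*}
Since $k$ is odd there is no middle term, and since $ts$ is even we have $\left( -1\right) ^{lts}=1$, so the identity $\alpha \beta =-1$ yields $\alpha ^{lts}\beta ^{\left( k-l\right) ts}=\beta ^{\left( k-2l\right) ts}$ and $\alpha ^{\left( k-l\right) ts}\beta ^{lts}=\alpha ^{\left( k-2l\right) ts}$. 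Adding the two paired fractions and using $\left( \alpha \beta \right) ^{\left( k-2l\right) ts}=1$ (again because $\left( k-2l\right) ts$ is even) together with Binet's formula for $L_{\left( k-2l\right) ts}\left( x\right) $, each pair collapses to $1$. Summing over the $\left( k+1\right) /2$ pairs will give
\begin{equation*}
\widetilde{\Omega }_{1}\left( x,1\right) =\sum_{l=0}^{\left( k-1\right) /2}\binom{k}{l}=2^{k-1}\text{,}
\end{equation*}
which is finite (and nonzero).

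Combining the two computations produces $\widetilde{\Omega }_{1}\left( x,1\right) \Omega _{2}\left( x,1\right) =2^{k-1}\cdot 0=0$, establishing (\ref{4.8}) and hence (\ref{4.6}). I expect the main obstacle to be the pairing calculation for $\widetilde{\Omega }_{1}\left( x,1\right) $: one must verify that the parity conditions (namely, $ts$ even and $k$ odd, so that no self-paired middle index appears and all relevant exponents of $\alpha \beta $ are even) are precisely what is needed for the pairwise cancellation to deliver a finite Lucas-type expression. Once this pairing is in hand, the rest follows from the general framework already set up for propositions \ref{Prop3.1}--\ref{Prop4.1}.
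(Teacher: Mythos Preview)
Your proposal is correct and follows essentially the same approach as the paper: both arguments use the factorization (\ref{4.14}) to see that the factor $z+\left(-1\right)^{tsk/2}=z-1$ forces $\Omega_{2}\left(x,1\right)=0$, and both verify that $\widetilde{\Omega}_{1}\left(x,1\right)$ is finite by the same pairing of the $l$ and $k-l$ terms (your value $2^{k-1}$ agrees with the paper's $4^{k-1}$ once the paper's substitution $k\mapsto 2k-1$ is undone).
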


\begin{proof}
We will show that in the case stated in the proposition we have $\widetilde{%
\Omega }_{1}\left( x,1\right) <\infty $ and $\Omega _{2}\left( x,1\right) =0$%
, which implies (\ref{4.8}). Since $s$ and $k$ are odd, and $t\equiv 2\func{%
mod}4$, the factor $\left( z+\left( -1\right) ^{\frac{tsk}{2}}\right) $ of (%
\ref{4.14}) is $\left( z-1\right) $, so we have $\Omega _{2}\left(
x,1\right) =0$. Let us see that $\widetilde{\Omega }_{1}\left( x,1\right)
<\infty $. If in (\ref{4.12}) we set $z=1$ and replace $k$ by $2k-1$, we get
for $t\equiv 2\func{mod}4$ and $s$ odd that $\widetilde{\Omega }_{1}\left(
x,1\right) =4^{k-1}$, as wanted.
\end{proof}

\begin{corollary}
\label{Cor4.2}If $t\equiv 2\func{mod}4$ and $k$ is odd, we have the
following identity valid for any $s\in \mathbb{N}$ 
\begin{eqnarray}
&&\sum_{n=0}^{q}\left( -1\right) ^{sn}L_{tsn}^{k}\left( x\right)
\label{4.43} \\
&=&\left( -1\right)
^{s+1}\sum_{m=1}^{tk}\sum_{i=0}^{tk-m}\sum_{j=0}^{i}\left( -1\right) ^{\frac{%
\left( sj+2(s+1)\right) \left( j+1\right) }{2}+s\left( i+tk+q+m\right) }%
\dbinom{tk+1}{j}_{F_{s}\left( x\right) }L_{ts\left( i-j\right) }^{k}\left(
x\right) \dbinom{q+m}{tk}_{F_{s}\left( x\right) }.  \notag
\end{eqnarray}
\end{corollary}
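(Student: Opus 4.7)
The plan is to mimic the argument given for Corollary \ref{Cor4.1}: split on the parity of $s$ and recognize the left-hand side of (\ref{4.43}) as either an alternating or a non-alternating sum of powers of Lucas polynomials, then invoke the appropriate already-established proposition. The key identity driving this reduction is $(-1)^{sn}=(-1)^{n}$ when $s$ is odd and $(-1)^{sn}=1$ when $s$ is even; the companion sign $(-1)^{s(i+tk+q+m)}$ appearing in the right-hand side of (\ref{4.43}) behaves in exactly the same manner.

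First, suppose $s$ is odd. Under the hypotheses $t\equiv 2\!\pmod 4$ and $k$ odd, the triple $(t,k,s)$ satisfies the requirement of Proposition \ref{Prop4.2}. Hence the alternating sum $\sum_{n=0}^{q}(-1)^{n}L_{tsn}^{k}(x)$ can be written in the form (\ref{4.6}). Substituting $(-1)^{sn}=(-1)^{n}$ on the left and $(-1)^{s(i+tk+q+m)}=(-1)^{i+tk+q+m}$ on the right, the formula (\ref{4.6}) becomes precisely (\ref{4.43}).

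Second, suppose $s$ is even. Then $(-1)^{sn}=1$, so the left-hand side of (\ref{4.43}) reduces to $\sum_{n=0}^{q}L_{tsn}^{k}(x)$. Since $t\equiv 2\!\pmod 4$ forces $t$ to be even, the triple $(t,k,s)$ (with $t$ even, $k$ odd, $s$ even) satisfies case (a) of Proposition \ref{Prop3.2}, which gives the representation (\ref{3.23}). On the right-hand side of (\ref{4.43}) the parity factor collapses, $(-1)^{s(i+tk+q+m)}=1$, and the expression reduces exactly to (\ref{3.23}).

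Putting the two cases together yields (\ref{4.43}) for every $s\in\mathbb{N}$. There is no real obstacle; the content of the corollary lies entirely in Propositions \ref{Prop3.2} and \ref{Prop4.2}, and the only task here is the bookkeeping check that the two sign conventions---one attached to the index $n$ on the left and the other attached to $i+tk+q+m$ on the right---collapse in the same way under each parity of $s$, which is immediate.
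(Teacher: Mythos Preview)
Your proof is correct and follows essentially the same approach as the paper: split on the parity of $s$, reduce to Proposition~\ref{Prop3.2}(a) when $s$ is even and to Proposition~\ref{Prop4.2} when $s$ is odd. Your version is slightly more explicit about the sign bookkeeping on both sides of the identity, but the logical content is identical.
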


\begin{proof}
When $t\equiv 2\func{mod}4$ and $k$ is odd, formula (\ref{4.43}) with $s$
even, gives the result (\ref{3.23}) of case (a) of proposition \ref{Prop3.2}
(which is valid for $t$ even, $k$ odd and $s$ even). Similarly, if $t\equiv 2%
\func{mod}4$ and $k$ is odd, formula (\ref{4.43}) with $s$ odd, gives the
result (\ref{4.6}) of proposition \ref{Prop4.2} (which is valid for $t\equiv
2\func{mod}4$, $k$ odd and $s$ odd).
\end{proof}

An example of (\ref{4.43}) is the following identity (corresponding to $t=2$
and $k=1$), valid for any $s\in \mathbb{N}$ 
\begin{equation}
\sum_{n=0}^{q}\left( -1\right) ^{s\left( n+q\right) }L_{2sn}\left( x\right)
=2\dbinom{q+2}{2}_{F_{s}\left( x\right) }-L_{2s}\left( x\right) \dbinom{q+1}{%
2}_{F_{s}\left( x\right) },  \label{4.44}
\end{equation}%
which can be written as%
\begin{equation}
\sum_{n=0}^{q}\left( -1\right) ^{s\left( n+q\right) }L_{2sn}\left( x\right) =%
\frac{1}{F_{s}\left( x\right) }L_{sq}\left( x\right) F_{s\left( q+1\right)
}\left( x\right) .  \label{4.45}
\end{equation}

\section{\label{Sec5}Further results}

To end this work we want to present (in two propositions) some examples of
identities obtained as derivatives of some of our previous results. We will
use the identities%
\begin{eqnarray}
\frac{d}{dx}F_{n}\left( x\right) &=&\frac{1}{x^{2}+4}\left( nL_{n}\left(
x\right) -xF_{n}\left( x\right) \right) .  \label{5.1} \\
\frac{d}{dx}L_{n}\left( x\right) &=&nF_{n}\left( x\right) .  \label{5.2}
\end{eqnarray}

One can see that these formulas are true by checking that both sides of each
one have the same $Z$ transform. By using (\ref{2.31}) and (\ref{2.75}) we
see that the $Z$ transform of both sides of (\ref{5.1}) is $z^{2}\left(
z^{2}-xz-1\right) ^{-2}$, and that the $Z$ transform of both sides of (\ref%
{5.2}) is $z\left( z^{2}+1\right) \left( z^{2}-xz-1\right) ^{-2}$.

\begin{proposition}
The following identities hold%
\begin{equation}
\left( -1\right) ^{\left( \!s+1\!\right) q}\!2L_{s}\!\left( x\right)
\!\sum_{n=0}^{q}\!\left( -1\right) ^{\left( \!s+1\!\right)
n}nF_{2sn}\!\left( x\right) =2qF_{s\left( 2q+1\right) }\!\left( x\right)
+F_{sq}\!\left( x\right) \!L_{s\left( q+1\right) }\!\left( x\right) -\frac{%
\left( \!x^{2}\!+\!4\!\right) \!F_{s}\!\left( x\right) }{L_{s}\!\left(
x\right) }F_{s\left( q+1\right) }\!\left( x\right) \!F_{sq}\!\left( x\right)
.  \label{5.4}
\end{equation}%
\begin{equation}
\left( -1\right) ^{sq}\!2F_{s}\!\left( x\right) \!\sum_{n=0}^{q}\!\left(
-1\right) ^{sn}nL_{2sn}\!\left( x\right) =2qF_{s\left( 2q+1\right) }\!\left(
x\right) +F_{sq}\!\left( x\right) \!L_{s\left( q+1\right) }\!\left( x\right)
-\frac{L_{s}\!\left( x\right) }{F_{s}\!\left( x\right) }F_{s\left(
q+1\right) }\!\left( x\right) \!F_{sq}\!\left( x\right) .  \label{5.5}
\end{equation}
\end{proposition}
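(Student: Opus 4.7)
My plan is to derive both identities as $x$-derivatives of the two closed-form evaluations already packaged in (4.27). Multiplying out $\binom{q+1}{2}_{F_{s}(x)}$ via (1.12) and using $F_{2s}(x)=F_{s}(x)L_{s}(x)$, (4.27) gives the two starting identities
\begin{equation*}
L_{s}(x)\sum_{n=0}^{q}(-1)^{(s+1)(n+q)}F_{sn}^{2}(x)=F_{s(q+1)}(x)F_{sq}(x)
\end{equation*}
and
\begin{equation*}
F_{s}(x)\sum_{n=0}^{q}(-1)^{s(n+q)}F_{2sn}(x)=F_{s(q+1)}(x)F_{sq}(x),
\end{equation*}
which I will differentiate in $x$ to obtain (5.4) and (5.5) respectively.

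For (5.5), I would differentiate the second identity and apply (5.1) to both $F_{s}'$ and $F_{2sn}'$. This produces $(2sF_{s}(x)/(x^{2}+4))\sum(-1)^{s(n+q)}nL_{2sn}(x)$ as the only term carrying the weight $n$, while all residual occurrences of $\sum(-1)^{s(n+q)}F_{2sn}$ are eliminated by reinvoking the starting identity. On the right, applying the product rule together with (5.1) to $F_{s(q+1)}(x)F_{sq}(x)$ yields the cross-term $s[(q+1)L_{s(q+1)}F_{sq}+qF_{s(q+1)}L_{sq}]$; here the standard Binet identity $F_{M}(x)L_{N}(x)+L_{M}(x)F_{N}(x)=2F_{M+N}(x)$ (with $M=sq$, $N=s(q+1)$) rewrites this as $s[2qF_{s(2q+1)}+L_{s(q+1)}F_{sq}]$. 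The $-2xF_{s(q+1)}F_{sq}/(x^{2}+4)$ contributions on both sides cancel, and after clearing the remaining $s/(x^{2}+4)$, multiplying through by $2F_{s}(x)$, and using $(-1)^{s(n+q)}=(-1)^{sq}(-1)^{sn}$, exactly (5.5) emerges.

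The argument for (5.4) is entirely parallel, starting from the first identity. The one extra step is that differentiating $F_{sn}^{2}(x)$ produces $2F_{sn}(x)F_{sn}'(x)$; by (5.1) this becomes $(2/(x^{2}+4))F_{sn}(x)(snL_{sn}(x)-xF_{sn}(x))$, and then the Binet product formula $F_{sn}(x)L_{sn}(x)=F_{2sn}(x)$ converts the first summand into the desired weighted sum $\sum n(-1)^{(s+1)(n+q)}F_{2sn}$. On the right-hand side $F_{s(q+1)}F_{sq}/L_{s}$, the additional derivative $L_{s}'(x)=sF_{s}(x)$ supplied by (5.2) is precisely what produces the third summand $-(x^{2}+4)F_{s}(x)F_{s(q+1)}(x)F_{sq}(x)/L_{s}(x)$ of (5.4), while the main piece $2qF_{s(2q+1)}+L_{s(q+1)}F_{sq}$ again comes from the $F_{M}L_{N}+L_{M}F_{N}=2F_{M+N}$ cross-term identity. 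After the $x/(x^{2}+4)$ terms cancel and we multiply by $2L_{s}(x)$, (5.4) drops out. No single step is hard; the main obstacle will be bookkeeping---keeping track of the three separate $x/(x^{2}+4)$ contributions so their cancellation is transparent, and invoking the Binet-type identities $F_{sn}L_{sn}=F_{2sn}$ and $F_{M}L_{N}+L_{M}F_{N}=2F_{M+N}$ at exactly the right moments.
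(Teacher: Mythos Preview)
Your proposal is correct and follows essentially the same route as the paper: both differentiate the two identities contained in (4.27), use (5.1)--(5.2) for the derivatives of $F_{n}(x)$ and $L_{n}(x)$, invoke $F_{sn}(x)L_{sn}(x)=F_{2sn}(x)$ and $F_{sq}L_{s(q+1)}+L_{sq}F_{s(q+1)}=2F_{s(2q+1)}$ (the paper cites the latter via (1.101)), and eliminate the unweighted sums by substituting the starting identities back in. The only cosmetic difference is that the paper differentiates $L_{s}(x)\cdot\sum=F_{s(q+1)}F_{sq}$ as written, whereas your phrasing ``on the right-hand side $F_{s(q+1)}F_{sq}/L_{s}$'' suggests you first divide through by $L_{s}(x)$ and then differentiate; these are of course equivalent.
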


\begin{proof}
We will use (\ref{4.27}), which contains two identities, namely 
\begin{equation}
\left( -1\right) ^{\left( s+1\right) q}L_{s}\left( x\right)
\sum_{n=0}^{q}\left( -1\right) ^{\left( s+1\right) n}F_{sn}^{2}\left(
x\right) =F_{s\left( q+1\right) }\left( x\right) F_{sq}\left( x\right) ,
\label{5.37}
\end{equation}%
and%
\begin{equation}
\left( -1\right) ^{sq}F_{s}\left( x\right) \sum_{n=0}^{q}\left( -1\right)
^{sn}F_{2sn}\left( x\right) =F_{s\left( q+1\right) }\left( x\right)
F_{sq}\left( x\right) .  \label{5.38}
\end{equation}

By using that $L_{sq}\left( x\right) F_{s\left( q+1\right) }\left( x\right)
+F_{sq}\left( x\right) L_{s\left( q+1\right) }\left( x\right) =2F_{s\left(
2q+1\right) }\left( x\right) $ (see (\ref{1.101})), we can see that 
\begin{equation*}
\frac{d}{dx}\left( F_{s\left( q+1\right) }\left( x\right) F_{sq}\left(
x\right) \right) =\frac{1}{x^{2}+4}\left( 2sqF_{s\left( 2q+1\right) }\left(
x\right) +sF_{sq}\left( x\right) L_{s\left( q+1\right) }\left( x\right)
-2xF_{s\left( q+1\right) }\left( x\right) F_{sq}\left( x\right) \right) .
\end{equation*}

The derivative of the left-hand side of (\ref{5.37}) is%
\begin{eqnarray*}
&&\frac{d}{dx}\left( \left( -1\right) ^{\left( s+1\right) q}L_{s}\left(
x\right) \sum_{n=0}^{q}\left( -1\right) ^{\left( s+1\right)
n}F_{sn}^{2}\left( x\right) \right) \\
&=&\left( -1\right) ^{\left( s+1\right) q}L_{s}\left( x\right) \sum_{n=0}^{q}%
\frac{\left( -1\right) ^{\left( s+1\right) n}}{x^{2}+4}2snF_{2sn}\left(
x\right) +\left( sF_{s}\left( x\right) -\frac{2xL_{s}\left( x\right) }{%
x^{2}+4}\right) \frac{1}{L_{s}\left( x\right) }F_{s\left( q+1\right) }\left(
x\right) F_{sq}\left( x\right) .
\end{eqnarray*}

Then, the derivative of (\ref{5.37}) is%
\begin{eqnarray*}
&&\left( -1\right) ^{\left( s+1\right) q}L_{s}\left( x\right) \sum_{n=0}^{q}%
\frac{\left( -1\right) ^{\left( s+1\right) n}}{x^{2}+4}2snF_{2sn}\left(
x\right) +\left( sF_{s}\left( x\right) -\frac{2xL_{s}\left( x\right) }{%
x^{2}+4}\right) \frac{1}{L_{s}\left( x\right) }F_{s\left( q+1\right) }\left(
x\right) F_{sq}\left( x\right) \\
&=&\frac{1}{x^{2}+4}\left( 2sqF_{s\left( 2q+1\right) }\left( x\right)
+sF_{sq}\left( x\right) L_{s\left( q+1\right) }\left( x\right)
-2xF_{sq}\left( x\right) F_{s\left( q+1\right) }\left( x\right) \right) ,
\end{eqnarray*}%
from where (\ref{5.4}) follows.

The derivative of the left-hand side of (\ref{5.38}) is%
\begin{eqnarray*}
&&\frac{d}{dx}\left( \left( -1\right) ^{sq}F_{s}\left( x\right)
\sum_{n=0}^{q}\left( -1\right) ^{sn}F_{2sn}\left( x\right) \right) \\
&=&\frac{\left( -1\right) ^{sq}}{x^{2}+4}F_{s}\left( x\right)
\sum_{n=0}^{q}\left( -1\right) ^{sn}2snL_{2sn}+\frac{sL_{s}\left( x\right)
-2xF_{s}\left( x\right) }{F_{s}\left( x\right) \left( x^{2}+4\right) }%
F_{s\left( q+1\right) }\left( x\right) F_{sq}\left( x\right) .
\end{eqnarray*}

Thus, the derivative of (\ref{5.38}) is%
\begin{eqnarray*}
&&\frac{\left( -1\right) ^{sq}}{x^{2}+4}F_{s}\left( x\right)
\sum_{n=0}^{q}\left( -1\right) ^{sn}2snL_{2sn}+\frac{sL_{s}\left( x\right)
-2xF_{s}\left( x\right) }{F_{s}\left( x\right) \left( x^{2}+4\right) }%
F_{s\left( q+1\right) }\left( x\right) F_{sq}\left( x\right) \\
&=&\frac{1}{x^{2}+4}\left( 2sqF_{s\left( 2q+1\right) }\left( x\right)
+sF_{sq}\left( x\right) L_{s\left( q+1\right) }\left( x\right) -2xF_{s\left(
q+1\right) }\left( x\right) F_{sq}\left( x\right) \right) ,
\end{eqnarray*}%
from where (\ref{5.5}) follows.
\end{proof}

\begin{proposition}
The following identity holds%
\begin{equation}
\left( -1\right) ^{sq}F_{s}\left( x\right) \sum_{n=0}^{q}\left( -1\right)
^{sn}nF_{2sn}\left( x\right) =\frac{1}{x^{2}+4}\left( \frac{\left( -1\right)
^{s+1}}{F_{s}\left( x\right) }F_{2sq}\left( x\right) +qL_{s\left(
2q+1\right) }\left( x\right) \right) .  \label{5.39}
\end{equation}
\end{proposition}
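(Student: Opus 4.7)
The strategy parallels that of the preceding proposition: we differentiate identity (\ref{4.45}). The crucial observation is that by (\ref{5.2}), $\frac{d}{dx}L_{2sn}(x)=2sn\,F_{2sn}(x)$, so differentiating a closed form for $\sum(-1)^{sn}L_{2sn}(x)$ manufactures precisely the weighted sum $\sum(-1)^{sn}n\,F_{2sn}(x)$ that appears on the left of (\ref{5.39}).

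Concretely, first rewrite (\ref{4.45}) in the equivalent form
\[
(-1)^{sq}F_{s}(x)\sum_{n=0}^{q}(-1)^{sn}L_{2sn}(x)=L_{sq}(x)F_{s(q+1)}(x).
\]
Denote by $T(x)$ the left-hand side of (\ref{5.39}). Applying $d/dx$ to both sides by the product rule, substituting (\ref{5.1}) for $F_{s}'(x)$ and $F_{s(q+1)}'(x)$ and (\ref{5.2}) for $L_{sq}'(x)$ and $L_{2sn}'(x)$, and then using (\ref{4.45}) a second time to eliminate the residual alternating Lucas sum, one isolates
\[
2T(x)=qF_{sq}(x)F_{s(q+1)}(x)+\frac{q+1}{x^{2}+4}L_{sq}(x)L_{s(q+1)}(x)-\frac{L_{s}(x)L_{sq}(x)F_{s(q+1)}(x)}{(x^{2}+4)F_{s}(x)},
\]
the $x$-contributions arising from the two applications of (\ref{5.1}) cancelling cleanly.

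To cast this in the form claimed in (\ref{5.39}), multiply through by $2(x^{2}+4)F_{s}(x)$ and invoke the product-to-sum identities furnished by Binet's formulas (which are special cases of (\ref{1.101}) and (\ref{1.102}) with $M=s(q+1)$, $N=sq$, $K=s$):
\[
(x^{2}+4)F_{sq}F_{s(q+1)}=L_{s(2q+1)}-(-1)^{sq}L_{s},\qquad L_{sq}L_{s(q+1)}=L_{s(2q+1)}+(-1)^{sq}L_{s},
\]
\[
L_{sq}F_{s(q+1)}=F_{s(2q+1)}+(-1)^{sq}F_{s},
\]
together with the classical $F_{s}L_{s}=F_{2s}$. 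All $(-1)^{sq}F_{2s}$ contributions cancel and the expression collapses to
\[
2(x^{2}+4)F_{s}(x)T(x)=(2q+2)F_{s}(x)L_{s(2q+1)}(x)-2F_{2s(q+1)}(x).
\]
A final Binet calculation (or (\ref{1.101}) with $M=s$, $N=s(2q+1)$, $K=2sq$) gives $F_{s}(x)L_{s(2q+1)}(x)=F_{2s(q+1)}(x)-(-1)^{s}F_{2sq}(x)$, which when substituted produces (\ref{5.39}).

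The main obstacle is really only the bookkeeping: no individual product-to-sum step is conceptually difficult, but the many $(-1)^{sq}$ factors must be tracked carefully through the simplification so that the final cancellations deliver the exact target $2qF_{s}L_{s(2q+1)}+2(-1)^{s+1}F_{2sq}$ on the right.
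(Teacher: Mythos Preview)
Your proposal is correct and follows essentially the same approach as the paper: both differentiate identity (\ref{4.45}) and then simplify the resulting expression using product-to-sum identities derivable from Binet's formulas. The paper's proof is terser---it merely cites the two identities $F_{s}L_{s(q+1)}-L_{s}F_{s(q+1)}=2(-1)^{s+1}F_{sq}$ and $(x^{2}+4)F_{sq}F_{s(q+1)}+L_{sq}L_{s(q+1)}=2L_{s(2q+1)}$ and leaves the computation to the reader---whereas you carry out the simplification explicitly via an equivalent set of Binet-type product formulas; the substance is the same.
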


\begin{proof}
Identity (\ref{5.39}) is the derivative of (\ref{4.45}), together with 
\begin{equation*}
F_{s}\left( x\right) L_{s\left( q+1\right) }\left( x\right) -L_{s}\left(
x\right) F_{s\left( q+1\right) }\left( x\right) =2\left( -1\right)
^{s+1}F_{sq}\left( x\right) ,
\end{equation*}%
and%
\begin{equation*}
\left( x^{2}+4\right) F_{sq}\left( x\right) F_{s\left( q+1\right) }\left(
x\right) +L_{sq}\left( x\right) L_{s\left( q+1\right) }\left( x\right)
=2L_{s\left( 2q+1\right) }\left( x\right) .
\end{equation*}

(See (\ref{1.101}) and (\ref{1.102}).) We leave the details of the
calculations to the reader.
\end{proof}

\section{Acknowledgments}

I thank the anonymous referee for the careful reading of the first version
of this article, and the valuable comments and suggestions.

\end{document}